\theoremstyle{plain}
\newtheorem{thm}{\protect\theoremname}
\theoremstyle{remark}
\newtheorem*{rem*}{\protect\remarkname}
\theoremstyle{plain}
\newtheorem{lem}[thm]{\protect\lemmaname}
\theoremstyle{remark}
\newtheorem{rem}{\protect\remarkname}
\providecommand{\lemmaname}{Lemma}
\providecommand{\remarkname}{Remark}
\providecommand{\theoremname}{Theorem}
\def\bysame{---}
\newcommand{\enquote}[1]{``#1''}
\begin{document}

\begin{frontmatter}
\title{Bayesian Variable Selection and Estimation for~Group Lasso}
\runtitle{Bayesian Variable Selection and Estimation for Group Lasso}

\begin{aug}
\author[a]{\fnms{Xiaofan}
\snm{Xu}\corref{}\ead[label=e1]{xiaofanxufl@gmail.com}} and
\author[b]{\fnms{Malay} \snm{Ghosh}\ead[label=e2]{ghoshm@stat.ufl.edu}}

\runauthor{X.  Xu and M. Ghosh}
\address[a]{Department of Statistics, University of Florida, \printead{e1}}
\address[b]{Department of Statistics, University of Florida, \printead{e2}}

\end{aug}

\begin{abstract}
The paper revisits the Bayesian group lasso and uses spike and slab priors
for group variable selection. In the process, the connection of our
model with penalized regression is demonstrated, and the role of posterior
median for thresholding is pointed out. We show that the posterior
median estimator has the oracle property for group variable selection
and estimation under orthogonal designs, while the group lasso has
suboptimal asymptotic estimation rate when variable selection consistency
is achieved. Next we consider bi-level selection problem and propose
the Bayesian sparse group selection again with spike and slab priors
to select variables both at the group level and also within a group.
We demonstrate via simulation that the posterior median estimator
of our spike and slab models has excellent performance for both variable
selection and estimation.
\end{abstract}

\begin{keyword}
\kwd{\xch{g}{G}roup \xch{v}{V}ariable \xch{s}{S}election}
\kwd{\xch{s}{S}pike and \xch{s}{S}lab \xch{p}{P}rior}
\kwd{Gibbs \xch{s}{S}ampling}
\kwd{\xch{m}{M}edian \xch{t}{T}hresholding}
\end{keyword}


\end{frontmatter}


\section{Introduction}

Group structures of predictors arise naturally in many statistical
applications:
\begin{itemize}
\item In a regression model, a multi-level categorical predictor is usually
represented by a group of dummy variables.
\item In an additive model, a continuous predictor may be represented by
a group of basis functions to incorporate nonlinear relationship.
\item Grouping structure of variables may be introduced into a model to
make use of some domain specific prior knowledge. Genes in the same
biological pathway, for example, form a natural group.
\end{itemize}
For a thorough review of the application of group variable selection
methods in statistical problems, one may refer to \citet{huang_selective_2012},
in which semiparametric regression models, varying coefficients models,
seemingly unrelated regressions and analysis of genomic data are discussed.

It is usually desirable to use the prior information on the grouping
structure to select variables group-wise. Depending on the application,
selecting individual variables in a group may or may not be relevant.
We will discuss variable selection methods which only conduct variable
selection at the group level, as well as bi-level selection methods
that select variables both at the group level and within group level.

Specifically, we consider a linear regression problem with $G$ factors
(groups):
\begin{align}
\bm{Y}_{n\times1}=\sum_{g=1}^{G}\bm{X}_{g}\bm{\beta}_{g}+\bm{\epsilon} & ,\label{group_linear_regression}
\end{align}
where $\bm{\epsilon}_{n\times1}\sim\bm{N}_{n}(\bm{0},\sigma^{2}\bm{I}_{n})$,
$\bm{\beta}_{g}$ is a coefficients vector of length $m_{g}$, and
$\bm{X}_{g}$ is an $n\times m_{g}$ covariate matrix corresponding
to the factor $\bm{\beta}_{g}$, $g=1,2,\dots,G$. Let $p$ be the
total number of predictors, so $p=\sum_{g=1}^{G}m_{g}$. In the following
article, we will use factor and group interchangeably to denote a
group of predictors that are formed naturally.

Penalized regression methods have been very popular for the power
to select relevant variables and estimate regression coefficients
simultaneously. Among them the lasso \citep{tibshirani_regression_1996},
which puts an upper bound on the $L_{1}$-norm of the regression coefficients,
draws much attention for its ability to both select and estimate.
A~distinctive feature of the lasso is that it can produce exact 0
estimates, resulting in automatic model selection with suitably chosen
penalty parameter. Least Angle Regression (LARS) makes the lasso even
more attractive because the full lasso solution path can be computed
with the cost of only one least squares estimation by a modified LARS
algorithm \citep{efron_least_2004}.

With multi-factor analysis of variance problems in mind, \citet{yuan_model_2006}
proposed the group lasso which generalizes the lasso in order to select
grouped variables (factors) for accurate prediction in regression.
The group lasso estimator is obtained by solving
\begin{align}
\min_{\bm{\beta}}\left\Vert \bm{Y}-\sum_{g=1}^{G}\bm{X}_{g}\bm{\beta}_{g}\right\Vert _{2}^{2}+\lambda\sum_{g=1}^{G}\|\bm{\beta}_{g}\|_{2} & .
\end{align}
We note that the lasso is a special case of the group lasso when all
the groups have size~1, i.e., $m_{1}=m_{2}=\cdots=m_{G}=1$.

One major issue with the lasso-type estimates is that it is difficult
to give satisfactory standard errors since the limit distribution
of the lasso estimator is very complicated \citep{knight_asymptotics_2000,chatterjee_bootstrapping_2011}.
But the Bayesian formulation of the lasso can produce reliable standard
errors without any extra efforts. \citet{tibshirani_regression_1996}
suggested that the lasso estimator is equivalent to the posterior
mode with independent double exponential prior for each regression
coefficient. Motivated by the fact that the double exponential distribution
can be expressed as a scale mixture of normal distributions, \citet{park_bayesian_2008}
developed a fully Bayesian hierarchical model and an efficient Gibbs
sampler for the lasso problem. \citet{kyung_penalized_2010} later
extended this model and proposed a fairly general fully Bayesian formulation
which could accommodate various lasso variations, including the group
lasso, the fused lasso \citep{tibshirani_sparsity_2004} and the elastic
net \citep{zou_regularization_2005} (see also \citealt{raman_bayesian_2009}).

Zero inflated mixture priors, an important subclass of spike and slab
priors \citep{mitchell_bayesian_1988}, have been utilized towards
a Bayesian approach for variable selection. \citet{george_approaches_1997}
used zero inflated normal mixture priors in the hierarchical formulation
for variable selection in a linear regression model. To select random
effects, \citet{chen_random_2003} allowed some random effects to effectively
drop out of the model by choosing mixture priors with point mass at
zero for the random effects variances in a linear mixed effects model.
\citet{zhao_credible_2012} developed new multiple intervals for selected
parameters under the Bayesian lasso model with zero inflated mixture
priors.

Point mass mixture priors are also studied by \citet{johnstone_needles_2004}
for estimation of possibly sparse sequences of Gaussian observations,
with an emphasis on utilizing the posterior median, which is proven
to be a soft thresholding estimator like the lasso but with data adaptive
thresholds. Heavy tailed distributions like double exponential for
the continuous part of the mixture are advocated for the purpose of
achieving optimal estimation risk. Posterior concentration of such
priors on sparse sequences is studied by \citet{castillo_needles_2012}.

Following \citet{johnstone_needles_2004}, \citet{yuan_efficient_2005}
combined the power of point mass mixture priors and double exponential
distributions in variable selection and estimation, and showed that
the resulting empirical Bayes estimator is closely related to the
lasso estimator. \citet{lykou_bayesian_2013} proposed a similar mixture
prior and focused on specifying the shrinkage parameter $\lambda$
based on Bayes factors. \citet{zhang_bayesian_2014} generalized this
prior for group variable selection and proposed the hierarchical structured
variable selection (HSVS) method for simultaneous selection of grouped
variables and variables within a group. They also extended the HSVS method
to account for within group serial correlations by using Bayesian
fused lasso technique for within group selection. These authors used
an FDR-based variable selection technique at the group level and posterior
credible intervals for selection of within group variables. The
paper considered an interesting application to molecular inversion
probe studies in breast cancer.

In this paper, instead of taking a traditional Bayesian approach to
group lasso problem \citep{kyung_penalized_2010,raman_bayesian_2009},
we will develop a Bayesian group lasso model with spike and slab priors
(hereafter referred to as BGL-SS) for problems that only require variable
selection at the group level. Our procedure consists of a multivariate
point mass mixture prior similar to \citet{zhang_bayesian_2014} and
produces exact 0 estimates at the group level to facilitate
group variable selection. Marginal posterior median is proven to be
a soft thresholding estimator, and can automatically select variables.
Simulation results suggest that while prediction accuracy is comparable
to the group lasso, median thresholding results in substantial reduction
of false positive rate in comparison to the latter.

Another important problem we focus in this paper is the bi-level selection.
\citet{simon_sparse-group_2012} proposed sparse group lasso
to produce exact 0 coefficients at the group level and also within
a group. The sparse group lasso estimator of $\bm{\beta}$ is given
by
\begin{align}
\min_{\bm{\beta}}\left(\biggl\|\bm{Y}-\sum_{g=1}^{G}\bm{X}_{g}\bm{\beta}_{g}\biggr\|_{2}^{2}+\lambda_{1}\|\bm{\beta}\|_{1}+\lambda_{2}\sum_{g=1}^{G}\|\bm{\beta}_{g}\|_{2}\right) & .\label{eq:sgl}
\end{align}

With the prior of the form
\begin{align}
\pi\left(\bm{\beta}\right) & \propto\exp\left\{ -\lambda_{1}\|\bm{\beta}\|_{1}-\lambda_{2}\sum_{g=1}^{G}\|\bm{\beta}_{g}\|_{2}\right\} ,\label{eq:sprior}
\end{align}
the posterior mode for problem \eqref{group_linear_regression} is
equivalent to the sparse group lasso estimator. We will show that
\eqref{eq:sprior} can also be expressed as a scale mixture of normals
and therefore we can easily provide a full Bayesian implementation
of sparse group lasso (BSGL). Next, to improve the BSGL model, which
undershrinks the coefficients and cannot automatically select variables,
we utilize a hierarchical spike and slab prior structure to select
variables both at the group level and within each group. We will refer
to this as Bayesian sparse group selection with spike and slab priors
(BSGS-SS). We will demonstrate the significant improvement in variable selection
and prediction power via simulation examples.

Although our BSGS-SS method is similar to the HSVS method of \citet{zhang_bayesian_2014},
which also focuses on selection of both group variables and variables
within selected groups, it differs from the latter in the following
sense. To select variables within a group, the HSVS method assumes independent
double exponential priors on the regression coefficients and conducts
selection via posterior credible intervals. They need to decide the
significance level and deal with the complex issue of multiplicity
adjustment. Our priors, with another spike and slab distribution at
the individual level, can automatically select and estimate variables
with posterior median thresholding. So our posterior median estimator
can be a good default estimator and has great variable selection and
prediction performance.

We stress that a key point of this paper is to advocate the use of
posterior median estimator in spike and slab type models as an alternative
sparse estimator to the (sparse) group lasso estimator since the former
can also select and estimate at the same time. Under an orthogonal
design, we will show that they are both soft thresholding estimators,
and the median thresholding estimator is consistent in model selection
and has optimal asymptotic estimation rate, while the group lasso
has to sacrifice estimation rate to achieve selection consistency.
The selected model by median thresholding has far lower false positive
rate than the model chosen by lasso methods in all our simulation
examples. It has even slightly better model selection accuracy than
the model with largest posterior probability, which is often a gold
standard for stochastic model selection \citep{george_approaches_1997,geweke_variable_1994}.
Also the prediction performance of posterior median estimator is better
than the corresponding lasso methods and is marginally better than
that of posterior mean. This is not surprising since the latter is
a Bayesian model averaging estimator and is widely believed to have
optimal prediction performance \citep{clyde_bayesian_1999,hoeting_bayesian_1999,brown_bayes_2002}.

\citet{griffin_structuring_2012} also addressed the variable selection
problem. Their goal was not only to examine whether or not just some
of the regression coefficients are zeros, but also whether there exists
some clustering or grouping of random effects. They met their target
by considering normal-gamma priors. In a later paper, \citet{griffin_priors_2013}
used the same priors, but primarily with the objective of robustifying
as well as combining ridge priors with g-priors \citep{zellner_assessing_1986}.

In Section \ref{sec:BGL-SS}, we assume independent multivariate zero
inflated mixture prior for each factor in our fully Bayesian formulation
of the group lasso (BGL-SS), and derive a Gibbs sampler to compute
the posterior mean and median as our estimators of the coefficients.
We introduce posterior median thresholding in this section and prove
a frequentist oracle property of our procedure for orthogonal designs.
Bi-level selection methods are developed in Section \ref{sec:bi-level-selection}.
In Section \ref{sub:BSGL}, we will introduce a fully Bayesian hierarchical
model for the sparse group lasso and an efficient Gibbs sampler.
We further improve this model in Section \ref{sub:BSGL-SS} with spike and
slab type priors and propose the BSGS-SS model in order to automatically
select variables and improve prediction performance. Simulation results
are given in Section \ref{sec:Simulation} in which our BGL-SS and
BSGS-SS methods show significant improvement in variable selection as compared
to the frequentist group lasso and traditional Bayesian group lasso
methods. We conclude with a brief discussion in Section \ref{sec:discussion}.

\section{Bayesian Group Lasso with Spike and Slab Prior (BGL-SS)\label{sec:BGL-SS}}

\subsection{Model Formulation}

We consider the regression problem with grouped variables in \eqref{group_linear_regression}.
\citet{kyung_penalized_2010} demonstrated that the prior
\begin{align}
\pi\left(\bm{\beta}_{g}\right)\propto\exp\left\{ -\frac{\lambda}{\sigma}\|\bm{\beta}_{g}\|_{2}\right\} ,\label{eq:multi_laplace_prior}
\end{align}
a multivariate generalization of the double exponential prior, can
also be expressed as a scale mixture of normals with Gamma hyperpriors.
Specifically, with
\begin{align}
\bm{\beta}_{g}|\tau_{g}^{2},\sigma^{2}\stackrel{ind}{\sim}\bm{N}_{m_{g}}\left(\bm{0},\tau_{g}^{2}\sigma^{2}\bm{I}_{m_{g}}\right),\tau_{g}^{2}\stackrel{ind}{\sim}\text{Gamma}\left(\frac{m_{g}+1}{2},\frac{\lambda^{2}}{2}\right) & ,
\end{align}
the marginal distribution of $\bm{\beta}_{g}$ is of the form \eqref{eq:multi_laplace_prior}.
This Bayesian formulation encourages shrinkage of coefficients at
the group level and provides comparable prediction performance with
the group lasso. However, this approach, based on estimation of $\bm{\beta}_{g}(g=1,\dots,G)$
by posterior means or medians, does not produce exact 0 estimates.
To introduce sparsity at the group level and facilitate group variable
selection, we assume a multivariate zero inflated mixture prior for
each $\bm{\beta}_{g}$. We propose the following hierarchical Bayesian
group lasso model with an independent spike and slab type prior for
each factor $\bm{\beta}_{g}$:
\begin{gather}
\bm{Y}|\bm{X},\bm{\beta},\sigma^{2}\sim\bm{N}_{n}(\bm{X}\bm{\beta},\sigma^{2}\bm{I}_{n}),\label{eq:model}\\
\bm{\beta}_{g}|\sigma^{2},\tau_{g}^{2}\stackrel{ind}{\sim}(1-\pi_{0})\bm{N}_{m_{g}}(\bm{0},\sigma^{2}\tau_{g}^{2}\bm{I}_{m_{g}})+\pi_{0}\delta_{0}(\bm{\beta}_{g}),\quad g=1,2,\dots,G,\label{eq:glp1}\\
\tau_{g}^{2}\stackrel{ind}{\sim}\text{Gamma}\left(\frac{m_{g}+1}{2},\frac{\lambda^{2}}{2}\right),\quad g=1,2,\dots,G,\label{eq:glp2}\\
\sigma^{2}\sim\text{Inverse Gamma}\left(\alpha,\gamma\right),\quad \sigma^{2}>0,
\end{gather}
where $\delta_{0}(\bm{\beta}_{g})$ denotes a point mass
at $\bm{0}\in\mathbb{R}^{m_{g}}$, $\bm{\beta}_{g}=(\beta_{g1},\dots,\beta_{gm_{g}})^{T}$.
In this paper, a limiting improper prior is used for $\sigma^{2}$,
$\pi(\sigma^{2})=1/\sigma^{2}$.

Fixing $\pi_{0}$ at $\frac{1}{2}$ is a popular choice since it assigns
equal prior probabilities to all submodels and represents no prior
information on the true model. Instead of fixing $\pi_{0}$, we place
a conjugate beta prior on it, $\pi_{0}\sim\text{Beta}(a,b)$. We prefer
$a=b=1$ since it gives a prior mean $\frac{1}{2}$ and  also allows
a prior spread. Under sparsity, for example, in gene selection problems,
one may need $\pi_{0}\equiv\pi_{0n}$ where $\pi_{0n}\rightarrow1$
as $n\rightarrow\infty$.

The value of $\lambda$ should be carefully tuned. A very large value
of $\lambda$ will overshrink the coefficients and thus yields severely
biased estimates; $\lambda\rightarrow0$ will lead to a very diffuse
distribution for the slab part and the null model will always be preferred
no matter what data we have because of the Lindley paradox \citep{lindley_statistical_1957}.
A conjugate gamma prior can be placed on the penalty parameter, $\lambda^{2}\sim\text{Gamma}(r,\delta)$.
Instead, we will take an empirical Bayes approach and estimate $\lambda$
from data using marginal maximum likelihood. Since marginal likelihood
function for $\lambda$ is intractable, a Monte Carlo EM algorithm
\citep{casella_empirical_2001,park_bayesian_2008} can be used to
estimate $\lambda$. The $k$th EM update for $\lambda$ is
\[
\lambda^{\left(k\right)}=\sqrt{\frac{p+G}{\sum_{g=1}^{G}E_{\lambda^{\left(k-1\right)}}\left[\tau_{g}^{2}\mid\bm{Y}\right]}},
\]
in which the posterior expectation of $\tau_{g}^{2}$ will be replaced
by the sample average of $\tau_{g}^{2}$ generated in the Gibbs sampler
based on $\lambda^{(k-1)}$.

It should be noted that \eqref{eq:glp1} is essentially a special
case of the prior used in \citet{zhang_bayesian_2014} which conducts
shrinkage at both the group level and also the individual level by
using independent exponential hyperpriors to induce lasso shrinkage
for individual variables. However, focusing on group level selection
only, BGL-SS instead uses group lasso prior on the slab part and is
tailored for problems that only require group level sparsity.

\subsection{Marginal Prior for \texorpdfstring{$\bm{\beta}_g$}{beta} and Connection
with Penalized Regression}

Integrating out $\tau_{g}^{2}$ in \eqref{eq:glp1} and \eqref{eq:glp2},
the marginal prior for $\bm{\beta}_{g}$ is a mixture of point mass
at $\bm{0}\in\mathbb{R}^{m_{g}}$ and a Multi-Laplace distribution:
\begin{align}
\bm{\beta}_{g}|\sigma^{2}\sim\left(1-\pi_{0}\right)\text{M-Laplace}\left(\bm{0},\frac{\sigma}{\lambda}\right)+\pi_{0}\delta_{0}\left(\bm{\beta}_{g}\right) & ,\label{marginal}
\end{align}
where the density function for an $m_{g}$-dimensional Multi-Laplace
distribution is
\begin{align}
\text{M-Laplace}\left(\bm{x}|0,c^{-1}\right)\propto c^{m_{g}}\exp\left(-c\|x\|_{2}\right) & .
\end{align}

We can observe from \eqref{marginal} that the marginal prior for
$\bm{\beta}_{g}$ has two shrinkage effects: one is the point mass
at $\bm{0}$ which leads to exact $0$ coefficients; the other, same
as the one considered in the Bayesian group lasso \citep{kyung_penalized_2010,raman_bayesian_2009},
results in shrinkage at the group level. Combining these two components
together facilitates variable selection at the group level and shrinks
coefficients in the selected groups at the same time. For the special
case when the dimension of $\bm{\beta}_{g}$ is 1, i.e., $m_{g}=1$,
\eqref{marginal} reduces to a one-dimensional mixture distribution
with a point mass at $0$ and a double exponential distribution. This
has been thoroughly studied by \citet{johnstone_needles_2004} and
\citet{castillo_needles_2012} for estimation of sparse normal means,
and by \citet{yuan_efficient_2005} and \citet{lykou_bayesian_2013}
for Bayesian variable selection. Importantly, it was shown that a
heavy-tailed distribution for the slab part, such as a double-exponential
distribution or a Cauchy-like distribution, is advantageous since
that it results in optimal estimation risk with posterior median estimator
and optimal posterior contraction rate for sparse means. We will generalize
the thresholding result of \citet{johnstone_needles_2004} on the
posterior median to our multivariate spike and slab type prior \eqref{eq:glp1}.

To see the connection between our model and the penalized regression
problem, we reparametrize the regression coefficients: $\bm{\beta}_{g}=\gamma_{g}\bm{b}_{g}$,
where $\gamma_{g}$ is an indicator that only takes value $0$ or
$1$, and $\bm{b}_{g}=(b_{g1},b_{g2},\dots,b_{gm_{g}})^{T}$.
We then place a Multi-Laplace prior on $\bm{b}_{g}$ and a Bernoulli
prior on $\gamma_{g}$,
\begin{align}
\bm{b}_{g}|\sigma & \stackrel{ind}{\sim}\text{M-Laplace}\left(\bm{0},\frac{\sigma}{\lambda}\right),\quad g=1,2,\dots,G,\label{eq:BernoulliGaussian}\\
\gamma_{g} & \stackrel{ind}{\sim}\text{Bernoulli}\left(1-\pi_{0}\right),\quad g=1,2,\dots,G.
\end{align}

Note that with this configuration, the marginal prior distribution
of $\bm{\beta}_{g}$ is still \eqref{marginal} and this model can
only be identified up to $\bm{\beta}_{g}=\gamma_{g}\bm{b}_{g}$. The
negative log-likelihood under the model \eqref{group_linear_regression}
and the above prior is
\[
-\log L\left(\bm{b},\bm{\gamma}|\bm{Y}\right)=\frac{1}{2\sigma^{2}}\|\bm{Y}-\bm{X}\bm{\beta}\|_{2}^{2}+\frac{\lambda}{\sigma}\sum_{g=1}^{G}\|\bm{b}_{g}\|_{2}+\log\left(\frac{1-\pi_{0}}{\pi_{0}}\right)\sum_{g=1}^{G}\gamma_{g}+const.
\]
Thus the posterior mode of the regression model \eqref{group_linear_regression}
under this new parametrization is equivalent to the solution of a
penalized regression problem with an $L_{2}$-penalty on each group
of coefficients and an $L_{0}$-like penalty, penalizing the number
of nonzero groups in the predictors. Solving this penalization regression
problem is extremely hard for problems with a moderate to large number
of groups of covariates because of the combinatorial optimization
problem induced by the $L_{0}$-like norm. We would also like to point
out that for the special case when all the groups have size 1, if
we replace the Laplace prior with Normal prior, it becomes the so-called Bernoulli--Gaussian model or Binary Mask model, and has been
applied to variable selection \citep{kuo_variable_1998} and signal
process problems \citep{zhou_non-parametric_2009,soussen_bernoulli-gaussian_2011}.

\subsection{Posterior Median as an Adaptive Thresholding Estimator}

Regarding the wavelet-based nonparametric problem, \citet{abramovich_wavelet_1998}
demonstrated that the traditional Bayes rule with respect to $L_{2}$-loss function is a shrinkage rule while the posterior median, which
is a Bayes estimator corresponding to $L_{1}$-loss, is a thresholding
estimator with spike and slab priors. \citet{johnstone_needles_2004}
showed that under spike and slab priors for normal means problem,
the posterior median is a random thresholding estimator with a couple
of desirable properties under fairly general conditions. In this section,
we will generalize the thresholding results of \citet{johnstone_needles_2004}
to multivariate spike and slab priors, with \eqref{eq:glp1} as a
special case. First, we focus on only one group:
\begin{equation}
\bm{Z}_{m\times1}\sim f\left(\bm{z}-\bm{\mu}\right),
\end{equation}
\begin{equation}
\bm{\mu}\sim\pi_{0}\delta_{0}\left(\bm{\mu}\right)+\left(1-\pi_{0}\right)\gamma\left(\bm{\mu}\right),
\end{equation}
where $\bm{Z}$ is an $m$-dimensional random variable, and $\gamma(\cdot)$
and $f(\cdot)$ are both density functions for $m$-dimensional
random vectors. $f(\bm{t})$ is maximized at $\bm{t}=\bm{0}$.
Let $\text{Med}(\mu_{i}|\bm{z})$ denote the marginal posterior
median of $\mu_{i}$ given data. We define
\[
c=\frac{\int f\left(-\bm{v}\right)\gamma\left(\bm{v}\right)d\bm{v}}{f\left(\bm{0}\right)}\leq\frac{\int f\left(\bm{0}\right)\gamma\left(\bm{v}\right)d\bm{v}}{f\left(\bm{0}\right)}=1,
\]
Then we have the following theorem:
\begin{thm}\label{thm1}
Suppose $\pi_{0}>\frac{c}{1+c}$,
then there exists a threshold $t(\pi_{0})>0$, such that
when $\|\bm{z}\|_{2}<t$,
\[
\text{Med}\left(\mu_{i}|\bm{z}\right)=0,\text{ for any }1\leq i\leq m.
\]
\end{thm}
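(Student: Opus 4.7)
The plan is to reduce the median-thresholding claim to a statement about the marginal posterior mass at the spike. Write the posterior of $\bm{\mu}$ given $\bm{z}$ as
\[
p(\bm{\mu}\mid\bm{z}) \;=\; p_{0}(\bm{z})\,\delta_{0}(\bm{\mu}) + \bigl(1-p_{0}(\bm{z})\bigr)\,\tilde{\gamma}(\bm{\mu}\mid\bm{z}),
\]
where Bayes' rule gives
\[
p_{0}(\bm{z}) \;=\; \frac{\pi_{0}\,f(\bm{z})}{\pi_{0}\,f(\bm{z}) + (1-\pi_{0})\,g(\bm{z})},\qquad g(\bm{z})=\int f(\bm{z}-\bm{v})\gamma(\bm{v})\,d\bm{v},
\]
and $\tilde{\gamma}(\cdot\mid\bm{z})$ is absolutely continuous (inherited from $\gamma$). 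Marginalizing to the $i$th coordinate preserves the spike: the marginal posterior of $\mu_{i}$ is $p_{0}(\bm{z})\delta_{0}+(1-p_{0}(\bm{z}))q_{i}(\cdot\mid\bm{z})$ for some density $q_{i}$, because the absolutely continuous slab contributes no atom at $\mu_{i}=0$.

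The first key step is the elementary observation that whenever $p_{0}(\bm{z})\geq \tfrac12$, both tail probabilities $P(\mu_{i}<0\mid\bm{z})$ and $P(\mu_{i}>0\mid\bm{z})$ are at most $1-p_{0}(\bm{z})\leq \tfrac12$, forcing $\mathrm{Med}(\mu_{i}\mid\bm{z})=0$. So it suffices to show that $p_{0}(\bm{z})\geq \tfrac12$ for all $\bm{z}$ in a neighborhood of the origin, which is equivalent to
\[
h(\bm{z}) \;:=\; \pi_{0}\,f(\bm{z}) - (1-\pi_{0})\,g(\bm{z}) \;\geq\; 0.
\]
Evaluating at $\bm{z}=\bm{0}$ uses the definition of $c$: since $g(\bm{0})=\int f(-\bm{v})\gamma(\bm{v})\,d\bm{v}=c\,f(\bm{0})$, we get $h(\bm{0})=f(\bm{0})\bigl[\pi_{0}-(1-\pi_{0})c\bigr]$, which is strictly positive precisely because the hypothesis $\pi_{0}>c/(1+c)$ rearranges to $\pi_{0}>(1-\pi_{0})c$.

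To propagate this strict inequality to a ball around the origin, I invoke continuity of $h$. Continuity of $f$ is immediate from the assumption that it is maximized at $\bm{0}$ together with its being a probability density (in practice $f$ is Gaussian or Laplace-type, hence continuous); continuity of $g$ follows from dominated convergence using $f(\bm{z}-\bm{v})\leq f(\bm{0})$ as an integrable dominator against $\gamma(\bm{v})$. Consequently there exists $t=t(\pi_{0})>0$ such that $h(\bm{z})>0$, and therefore $p_{0}(\bm{z})>\tfrac12$, for every $\bm{z}$ with $\|\bm{z}\|_{2}<t$. Setting the threshold to be this $t$ completes the argument and yields $\mathrm{Med}(\mu_{i}\mid\bm{z})=0$ uniformly in $i$.

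The only mildly delicate step is the continuity of $g$ and the implicit regularity on $f$: the statement as given does not explicitly assume continuity, but it is a standing assumption in the spike-and-slab literature (and holds for the Gaussian likelihood and Multi-Laplace slab that motivate the paper). Aside from that, the proof is a direct multivariate generalization of Johnstone and Silverman's univariate thresholding identity, with the constant $c$ playing the role that $1$ plays when $\gamma$ and $f$ coincide at their common mode.
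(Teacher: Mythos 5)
Your argument is correct and is essentially the paper's own proof in different notation: the paper works with the posterior odds of $\bm{\mu}\neq\bm{0}$ being less than $1$ at $\bm{z}=\bm{0}$ and extends by continuity, which is exactly your condition $p_{0}(\bm{z})>\tfrac{1}{2}$, i.e., $h(\bm{z})>0$. Your justification of continuity via dominated convergence (with $f(\bm{z}-\bm{v})\leq f(\bm{0})$ as dominator) is a welcome addition, since the paper simply asserts continuity of the odds, and you correctly flag that continuity of $f$ is an implicit regularity assumption in both treatments.
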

\begin{proof} The posterior odds of $\bm{\mu}\neq\bm{0}$
given $\bm{Z}=\bm{0}$ is
\begin{align*}
Odds\left(\bm{\mu}\neq\bm{0}|\bm{Z}=\bm{0}\right) & =\frac{1-\pi_{0}}{\pi_{0}}\frac{\int f\left(\bm{0}-\bm{v}\right)\gamma\left(\bm{v}\right)d\bm{v}}{f\left(\bm{0}\right)}\\
 & =\frac{1-\pi_{0}}{\pi_{0}}c\\
 & <1.
\end{align*}
Note that $Odds(\bm{\mu}\neq\bm{0}|\bm{Z}=\bm{z})$ is
a continuous function of $\bm{z}$. Hence, there exists $t(\pi_{0})>0$,
such that when $\|\bm{z}\|_{2}<t$, $Odds(\bm{\mu}\neq\bm{0}|\bm{Z}=\bm{z})<1$.
Therefore, when $\|\bm{z}\|_{2}<t$, for any $i(1\leq i\leq m)$,
$P(\mu_{i}=0|\bm{Z}=\bm{z})\geq P(\bm{\mu}=\bm{0}|\bm{Z}=\bm{z})>\frac{1}{2}$,
and we conclude that $\text{Med}(\mu_{i}|\bm{z})=0$.
\end{proof}
Suppose now the design matrix $X$ in (\xch{\ref{eq:model}}{2.3}) is block orthogonal, i.e.,
$\bm{X}_{i}^{T}\bm{X}_{j}=\bm{0}$ for $i\neq j$. Then for $1\leq g\leq G$
we have
\[
\hat{\bm{\beta}}_{g}=\left(\bm{X}_{g}^{T}\bm{X}_{g}\right)^{-1}\bm{X}_{g}^{T}\bm{Y}\sim N_{m_{g}}\left(\bm{\beta}_{g},\sigma^{2}\left(\bm{X}_{g}^{T}\bm{X}_{g}\right)^{-1}\right).
\]
By Theorem \xch{\ref{thm1}}{1}, suppose $\pi_{0}>\frac{c}{1+c}$, then there exists
$t(\pi_{0})>0$, such that the marginal posterior median
of $\beta_{gj}$ under the prior (\xch{\ref{eq:glp1}}{2.4}) satisfies
\[
\text{Med}\left(\beta_{gj}|\hat{\bm{\beta}}_{g}\right)=0\quad\text{for any }1\leq j\leq m_{g}
\]
when $\|\hat{\bm{\beta}}_{g}\|_{2}<t$. Thus the marginal posterior
median estimator of the $g$th group of regression coefficients is
zero when the norm of the corresponding block least square estimator
is less than certain threshold.

To illustrate the random thresholding property of posterior median
estimator, we further assume that the design matrix $\bm{X}$ is orthogonal,
i.e., $\bm{X}^{T}\bm{X}=n\bm{I}_{p}$ for the rest of this subsection
and consider the model defined by \eqref{eq:model} and \eqref{eq:glp1}
with fixed $\tau_{g,n}^{2}(1\leq g\leq G)$. Note that
we use the subscript $n$ here to emphasize that $\tau_{g}^{2}$ depends
on $n$ for developing the asymptotic theory. Under this model, the
posterior distribution of $\bm{\beta}_{g}$ conditional on the data
is still a multivariate spike and slab distribution,
\[
\bm{\beta}_{g}|\bm{Y},\bm{X}\sim l_{g,n}\delta_{0}\left(\bm{\beta}_{g}\right)+\left(1-l_{g,n}\right)\bm{N}_{m_{g}}\left(\left(1-B_{g,n}\right)\hat{\bm{\beta}}_{g}^{LS},\frac{\sigma^{2}}{n}\left(1-B_{g,n}\right)\bm{I}\right),
\]
where $\hat{\bm{\beta}}_{g}^{LS}$ is the least squares estimator of
$\bm{\beta}_{g}$, $B_{g,n}=\frac{1}{1+n\tau_{g,n}^{2}}$, and
\[
l_{g,n}=P\left(\bm{\beta}_{g}=\bm{0}|\bm{Y},\bm{X}\right)=\frac{\pi_{0}}{\pi_{0}+\left(1-\pi_{0}\right)\left(1+n\tau_{g,n}^{2}\right)^{-m_{g}/2}\exp\left\{ \frac{\left(1-B_{g,n}\right)}{2\sigma^{2}}n\|\bm{\hat{\beta}}_{g}^{LS}\|_{2}^{2}\right\} }.
\]
Thus the marginal posterior distribution for $\beta_{gj}(1\leq j\leq m_{g})$
conditional on the observed data is also a spike and slab distribution,
\[
\beta_{gj}|\bm{Y},\bm{X}\sim l_{g,n}\delta_{0}\left(\beta_{gj}\right)+\left(1-l_{g,n}\right)N\left(\left(1-B_{g,n}\right)\hat{\beta}_{gj}^{LS},\frac{\sigma^{2}}{n}\left(1-B_{g,n}\right)\right).
\]
The resulting median, a soft thresholding estimator, is given by
\begin{equation}
\hat{\beta}_{gj}^{Med}\stackrel{\triangle}{=}\text{Med}\left(\beta_{gj}|\bm{Y},\bm{X}\right)=\text{sgn}\left(\hat{\beta}_{gj}^{LS}\right)\left(\left(1-B_{g,n}\right)|\hat{\beta}_{gj}^{LS}|-\frac{\sigma}{\sqrt{n}}Q_{g,n}\sqrt{1-B_{g,n}}\right)_{+},\label{eq:median-thresholding}
\end{equation}
where $z_{+}$ denotes the positive part of $z$, and
$Q_{g,n}=\Phi^{-1}(\frac{1}{2(1-\text{min}(\frac{1}{2},l_{g,n}))})$.
This is similar to the group lasso estimator \citep{yuan_model_2006}
which can also be expressed as a soft thresholding estimator under
an orthogonal design:
\[
\hat{\beta}_{gj}^{GL}=\left(1-\frac{\lambda_{n}}{n\|\hat{\bm{\beta}}_{g}^{LS}\|_{2}}\right)_{+}\hat{\beta}_{gj}^{LS}=\text{sgn}\left(\hat{\beta}_{gj}^{LS}\right)\left(|\hat{\beta}_{gj}^{LS}|-\frac{\lambda_{n}}{n}\cdot\frac{|\hat{\beta}_{gj}^{LS}|}{\|\hat{\bm{\beta}}_{g}^{LS}\|_{2}}\right)_{+}.
\]
It should be noted that the $L_{2}$-norm of the shrinkage vector
for the $g$th group is $\lambda_{n}/n$, which is a fixed amount
and does not relate to the relative importance of each factor. It
is expected that such a penalty could be excessive and adversely affect
the estimation efficiency and model selection consistency \citep{wang_note_2008}.
We will demonstrate this point for an orthogonal design.
 \begin{rem}
One interesting observation from \eqref{eq:median-thresholding} is
the interaction of the spike part and the slab part in the posterior
inference. The spike part leads to a soft thresholding estimator that
can select variables automatically and the thresholds depend on $\pi_{0}$,
while the hyperparameter in the slab part, $\tau_{g,n}^{2}$ (or $\lambda$
if the gamma hyperprior is assumed) decides the shrinkage factor $B_{g,n}$.
\end{rem}
Let $\bm{\beta}^{0},\bm{\beta}_{g}^{0},\beta_{gj}^{0}$
denote the true values of $\bm{\beta},\bm{\beta}_{g},\beta_{gj}$,
respectively. Define the index vector of the true model as
$\mathcal{A}=(I(\|\bm{\beta}_{g}\|_{2}\neq0),g=1,2,\dots,G)$,
and the index vector of the model selected by certain thresholding
estimator $\hat{\bm{\beta}}_{g}$ as
$\mathcal{A}_{n}=(I(\|\hat{\bm{\beta}}_{g}\|_{2}\neq0),g=1,2,\dots,\allowbreak G)$.
Model selection consistency is attained if and only if $\lim_{n}P(\mathcal{A}_{n}=\mathcal{A})=1$.
\begin{lem} If $\lambda_{n}/\sqrt{n}\rightarrow\lambda_{0}\geq0$,
then $\lim\sup_{n}P(\mathcal{A}_{n}^{GL}=\mathcal{A})<1$.\end{lem}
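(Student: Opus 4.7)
The plan is to exploit the explicit soft-thresholding form of the group lasso estimator under orthogonality: namely, $\hat{\bm{\beta}}_g^{GL}=\bm{0}$ if and only if $\|\hat{\bm{\beta}}_g^{LS}\|_2\le \lambda_n/n$. Under sparsity of $\mathcal{A}$ (the natural setting in which model selection consistency is a meaningful question), I pick any group index $g_0$ with $\bm{\beta}_{g_0}^0=\bm{0}$ and show that the group lasso cannot zero out this group with probability tending to one. The growth rate $\lambda_n=O(\sqrt{n})$ makes the threshold $\lambda_n/n$ of order $n^{-1/2}$, which is exactly the order of the fluctuations of $\hat{\bm{\beta}}_{g_0}^{LS}$, so the two balance and a strictly positive mass is left over.

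The execution is in four short steps. First, under $\bm{X}^T\bm{X}=n\bm{I}_p$, the block least squares estimator satisfies $\hat{\bm{\beta}}_{g_0}^{LS}\sim\bm{N}_{m_{g_0}}(\bm{0},(\sigma^2/n)\bm{I}_{m_{g_0}})$, hence $\sqrt{n}\,\|\hat{\bm{\beta}}_{g_0}^{LS}\|_2/\sigma$ has a chi distribution $\chi_{m_{g_0}}$ that does not depend on $n$. Second, using the thresholding characterization,
\[
P\bigl(\hat{\bm{\beta}}_{g_0}^{GL}=\bm{0}\bigr)=P\!\left(\frac{\sqrt{n}\,\|\hat{\bm{\beta}}_{g_0}^{LS}\|_2}{\sigma}\le \frac{\lambda_n}{\sigma\sqrt{n}}\right).
\]
Third, passing to the limit with $\lambda_n/\sqrt{n}\to\lambda_0$, the right-hand side converges to $F_{\chi_{m_{g_0}}}(\lambda_0/\sigma)$, where $F_{\chi_{m_{g_0}}}$ is the CDF of a chi random variable with $m_{g_0}$ degrees of freedom. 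Fourth, since $F_{\chi_{m_{g_0}}}$ places strictly positive mass on $(\lambda_0/\sigma,\infty)$, this limit is strictly less than $1$. The conclusion then follows from the trivial bound $P(\mathcal{A}_n^{GL}=\mathcal{A})\le P(\hat{\bm{\beta}}_{g_0}^{GL}=\bm{0})$, yielding $\limsup_n P(\mathcal{A}_n^{GL}=\mathcal{A})\le F_{\chi_{m_{g_0}}}(\lambda_0/\sigma)<1$.

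The argument is essentially a one-sided probability lower bound rather than a delicate limit theorem, so I do not anticipate a genuinely hard step. The only subtlety worth flagging is that the proof implicitly requires at least one truly inactive group (otherwise, if $\mathcal{A}$ has all components equal to $1$, then $\lambda_n/n\to 0$ combined with $\|\hat{\bm{\beta}}_g^{LS}\|_2\to \|\bm{\beta}_g^0\|_2>0$ a.s. would force selection consistency). This sparsity assumption is the implicit frame of the lemma, consistent with the rest of the paper's focus on selecting zero coefficients. A second minor point is that one should write the probability in terms of the scaled statistic $\sqrt{n}\|\hat{\bm{\beta}}_{g_0}^{LS}\|_2/\sigma$ before taking the limit, precisely because $\hat{\bm{\beta}}_{g_0}^{LS}$ itself is $n$-dependent, whereas the rescaled quantity has a fixed $\chi_{m_{g_0}}$ distribution.
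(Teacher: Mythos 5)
Your proof is correct and follows essentially the same route as the paper: isolate a truly inactive group, use the soft-thresholding characterization $\hat{\bm{\beta}}_{g}^{GL}=\bm{0}\iff\|\hat{\bm{\beta}}_{g}^{LS}\|_{2}\le\lambda_{n}/n$, rescale by $\sqrt{n}$, and bound $\limsup_{n}P(\mathcal{A}_{n}^{GL}=\mathcal{A})$ by the probability that a fixed Gaussian norm falls below $\lambda_{0}$. Your version is in fact slightly cleaner than the paper's (which invokes Fatou's lemma and drops the $\sigma$ scaling), since you work with the exact $\chi_{m_{g_0}}$ distribution and explicitly flag the implicit assumption that at least one group is inactive.
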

\begin{proof} Note that for any $g$ such that $\|\bm{\beta}_{g}\|_{2}=0$,
\[
P\left(\|\hat{\bm{\beta}}_{g}^{GL}\|_{2}=0\right)=P\left(\|\hat{\bm{\beta}}_{g}^{LS}\|_{2}\leq\frac{\lambda_{n}}{n}\right)=P\left(\|\sqrt{n}\hat{\bm{\beta}}_{g}^{LS}\|_{2}\leq\frac{\lambda_{n}}{\sqrt{n}}\right),
\]
where
$\sqrt{n}\hat{\bm{\beta}}_{g}^{LS}\stackrel{d}{\rightarrow}\bm{Z}$,
$\bm{Z}\sim\bm{N}(\bm{0},\bm{I})$,
and
$\lambda_{n}/\sqrt{n}\rightarrow\lambda_{0}\geq0$. Thus by Fatou's
Lemma,
\[
\lim\sup_{n}P\left(\mathcal{A}_{n}^{GL}=\mathcal{A}\right)\leq\lim\sup_{n}P\left(\|\hat{\bm{\beta}}_{g}^{GL}\|_{2}=0\right)\leq P\left(\|\bm{Z}\|_{2}\leq\lambda_{0}\right)<1.\qedhere
\]%
\end{proof}%
We can observe from above lemma that in order for the
group lasso to consistently select variables, we must have $\lambda_{n}/\sqrt{n}\rightarrow\infty$.
But this condition does not give optimal estimation rate, as demonstrated
by the following lemma.
\begin{lem}
If $\lambda_{n}/\sqrt{n}\rightarrow\infty$, then
\[
\frac{n}{\lambda_{n}}\left(\hat{\bm{\beta}}^{GL}-\bm{\beta}^{0}\right)\stackrel{p}{\rightarrow}\bm{C},
\]
where $\bm{C}=(\bm{\beta}_{g}^{0}/\|\bm{\beta}_{g}^{0}\|_{2},g=1,\dots,G)^{T}$,
is a vector of constants depending on the true model.
\end{lem}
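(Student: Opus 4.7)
The plan is to exploit two features established earlier in the paper: (i) under the orthogonal design $\bm X^T\bm X = n\bm I_p$, the group lasso estimator decouples across groups and admits the explicit soft-thresholding representation
\[
\hat{\bm{\beta}}_g^{GL} = \Bigl(1 - \frac{\lambda_n}{n\|\hat{\bm{\beta}}_g^{LS}\|_2}\Bigr)_{+}\hat{\bm{\beta}}_g^{LS},
\]
and (ii) each block least squares estimator satisfies $\sqrt{n}(\hat{\bm{\beta}}_g^{LS}-\bm{\beta}_g^0)\xrightarrow{d}\bm{N}(\bm{0},\sigma^2\bm{I})$, i.e. $\hat{\bm{\beta}}_g^{LS}=\bm{\beta}_g^0+O_p(n^{-1/2})$. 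I would then split the groups into two classes, inactive groups with $\bm{\beta}_g^0=\bm{0}$ and active groups with $\bm{\beta}_g^0\neq\bm{0}$, and analyze each block separately.

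For an inactive group, the goal is to show $\hat{\bm{\beta}}_g^{GL}=\bm{0}$ with probability tending to one, so that the block $(n/\lambda_n)(\hat{\bm{\beta}}_g^{GL}-\bm{\beta}_g^0)$ is exactly zero on a set of probability $\to 1$. This follows from the thresholding formula: $\|\hat{\bm{\beta}}_g^{LS}\|_2=O_p(n^{-1/2})$, while the shrinkage level $\lambda_n/n$ is of strictly larger order by the assumption $\lambda_n/\sqrt{n}\to\infty$, so the positive part kills the estimator, as in the proof of the previous lemma.

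For an active group, $\|\hat{\bm{\beta}}_g^{LS}\|_2\xrightarrow{p}\|\bm{\beta}_g^0\|_2>0$, so the positive part is active and the estimator equals $\hat{\bm{\beta}}_g^{LS}-(\lambda_n/n)\hat{\bm{\beta}}_g^{LS}/\|\hat{\bm{\beta}}_g^{LS}\|_2$ with probability tending to one. Subtracting $\bm{\beta}_g^0$ and multiplying by $n/\lambda_n$ produces the identity
\[
\frac{n}{\lambda_n}\bigl(\hat{\bm{\beta}}_g^{GL}-\bm{\beta}_g^0\bigr)=\frac{\sqrt{n}}{\lambda_n}\cdot\sqrt{n}\bigl(\hat{\bm{\beta}}_g^{LS}-\bm{\beta}_g^0\bigr)-\frac{\hat{\bm{\beta}}_g^{LS}}{\|\hat{\bm{\beta}}_g^{LS}\|_2}.
\]
The first term is $o_p(1)\cdot O_p(1)=o_p(1)$ by $\lambda_n/\sqrt{n}\to\infty$, and the second converges in probability to $\bm{\beta}_g^0/\|\bm{\beta}_g^0\|_2$ by the continuous mapping theorem. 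Assembling the block-wise limits via Slutsky gives the claimed joint convergence, with $\bm{C}_g$ interpreted as $\bm{0}$ on the inactive coordinates.

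There is no substantive obstacle; the only care needed is the bookkeeping for the event $\{\|\hat{\bm{\beta}}_g^{LS}\|_2>\lambda_n/n\}$ on which the positive part is active and the explicit linear form is valid. Once this event is shown to have probability tending to one (using the two different orders of magnitude of $\hat{\bm{\beta}}_g^{LS}$ in the active versus inactive cases), the remainder is a routine application of Slutsky's theorem and the continuous mapping theorem, and the scaling $n/\lambda_n$ is precisely the correct one because it balances the shrinkage bias $(\lambda_n/n)\hat{\bm{\beta}}_g^{LS}/\|\hat{\bm{\beta}}_g^{LS}\|_2$ against the statistical error $\hat{\bm{\beta}}_g^{LS}-\bm{\beta}_g^0$, dominating the latter. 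This dominance is the quantitative content of the lemma, which we then contrast with the $\sqrt n$-rate achievable when $\lambda_n/\sqrt n\to\lambda_0<\infty$.
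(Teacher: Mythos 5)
Your proposal is correct and follows essentially the same route as the paper: both start from the orthogonal-design soft-thresholding form of $\hat{\bm{\beta}}_g^{GL}$, isolate the statistical-error term $(\sqrt{n}/\lambda_n)\cdot\sqrt{n}(\hat{\bm{\beta}}_g^{LS}-\bm{\beta}_g^{0})=o_p(1)\cdot O_p(1)$, show the positive-part/indicator contribution is negligible, and identify the limit of the shrinkage direction $\hat{\bm{\beta}}_g^{LS}/\|\hat{\bm{\beta}}_g^{LS}\|_2$ by Slutsky and the continuous mapping theorem. Your explicit active/inactive split is if anything slightly more careful than the paper's uniform treatment (whose claim $I(n\|\hat{\bm{\beta}}_g^{LS}\|_2<\lambda_n)\stackrel{p}{\rightarrow}0$ holds only when $\bm{\beta}_g^{0}\neq\bm{0}$); the only blemish is a sign: the decomposition yields the limit $-\bm{\beta}_g^{0}/\|\bm{\beta}_g^{0}\|_2$ for an active group, exactly as in the paper's own displayed computation, so the plus sign in the lemma statement and in your reading of the second term is a harmless inconsistency that does not affect the point of the lemma, namely the $n/\lambda_n$ rate.
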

\begin{proof}
For any $g(1\leq g\leq G)$,
\begin{align*}
&\frac{n}{\lambda_{n}}\left(\hat{\bm{\beta}}_{g}^{GL}-\bm{\beta}_{g}^{0}\right)\\
&\quad{}=\frac{\sqrt{n}}{\lambda_{n}}\sqrt{n}\left(\hat{\bm{\beta}}_{g}^{LS}-\bm{\beta}_{g}^{0}\right)-\frac{n}{\lambda_{n}}\left(1-\frac{\lambda_{n}}{n\|\hat{\bm{\beta}}_{g}^{LS}\|_{2}}\right)I\left(n\|\hat{\bm{\beta}}_{g}^{LS}\|_{2}<\lambda_{n}\right)-\frac{1}{\|\hat{\bm{\beta}}_{g}^{LS}\|_{2}}\hat{\bm{\beta}}_{g}^{LS}\\
&\quad{}\stackrel{p}{\rightarrow}-\frac{1}{\|\bm{\beta}_{g}^{0}\|_{2}}\bm{\beta}_{g}^{0}
\end{align*}
by noting that $\sqrt{n}(\hat{\bm{\beta}}_{g}^{LS}-\bm{\beta}_{g}^{0})=O_{p}(1)$,
$\frac{\sqrt{n}}{\lambda_{n}}\rightarrow0$,
$I(n\|\hat{\bm{\beta}}_{g}^{LS}\|_{2}<\lambda_{n})\stackrel{p}{\rightarrow}0$
and applying Slutsky's theorem. \end{proof} Thus the convergence
rate of the group lasso estimator is $n/\lambda_{n}$, which is slower
than $\sqrt{n}$. Adaptive group lasso \citep{wang_note_2008,nardi_asymptotic_2008}
was proposed to overcome this limitation. By using different regularization
parameter that depends on the least square estimators for different
factors, the adaptive group lasso enjoys oracle property. We will
show that the median thresholding estimator also has the oracle property
under an orthogonal design. \begin{thm} Assume orthogonal design
matrix, i.e., $\bm{X}^{T}\bm{X}=n\bm{I}_{p}$. Suppose $\sqrt{n}\tau_{g,n}^{2}\rightarrow\infty$
and $\log(\tau_{g,n}^{2})/n\rightarrow0$ as $n\rightarrow\infty$,
for $g=1,\dots,G$, then the median thresholding estimator has oracle
property, that is, variable selection consistency,
\[
\lim_{n\rightarrow\infty}P\left(\mathcal{A}_{n}^{Med}=\mathcal{A}\right)=1
\]
and asymptotic normality,
\[
\sqrt{n}\left(\hat{\bm{\beta}}_{\mathcal{A}}^{Med}-\bm{\beta}_{\mathcal{A}}^{0}\right)\stackrel{d}{\rightarrow}\bm{N}\left(\bm{0},\sigma^{2}\bm{I}\right).
\]
\end{thm}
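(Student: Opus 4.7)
The plan is to leverage the explicit soft-thresholding formula \eqref{eq:median-thresholding} and reduce both claims to the asymptotic behavior of the two data-dependent quantities $B_{g,n}=1/(1+n\tau_{g,n}^{2})$ and $Q_{g,n}=\Phi^{-1}\!\bigl(1/[2(1-\min(1/2,l_{g,n}))]\bigr)$. The hypothesis $\sqrt{n}\tau_{g,n}^{2}\to\infty$ immediately gives $n\tau_{g,n}^{2}\to\infty$, so $B_{g,n}\to 0$, $1-B_{g,n}\to 1$, and more importantly $\sqrt{n}B_{g,n}=1/(\sqrt{n}\tau_{g,n}^{2})(1+o(1))\to 0$. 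The orthogonal-design property $\hat{\bm{\beta}}_{g}^{LS}\sim\bm{N}(\bm{\beta}_{g}^{0},\sigma^{2}\bm{I}/n)$ will be used throughout, together with mutual independence across $g,j$.

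For selection consistency, I would study $l_{g,n}$ separately for inactive and active groups. If $\|\bm{\beta}_{g}^{0}\|_{2}=0$, then $n\|\hat{\bm{\beta}}_{g}^{LS}\|_{2}^{2}=O_{p}(1)$ and the exponential factor in $l_{g,n}$ is bounded in probability, while $(1+n\tau_{g,n}^{2})^{-m_{g}/2}\to 0$; hence $l_{g,n}\stackrel{p}{\to}1$. On the event $\{l_{g,n}\ge 1/2\}$ we have $\min(1/2,l_{g,n})=1/2$, so the threshold $(\sigma/\sqrt{n})Q_{g,n}\sqrt{1-B_{g,n}}$ is effectively infinite and \eqref{eq:median-thresholding} forces $\hat{\bm{\beta}}_{g}^{Med}=\bm{0}$; thus $P(\|\hat{\bm{\beta}}_{g}^{Med}\|_{2}=0)\to 1$. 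If instead $\|\bm{\beta}_{g}^{0}\|_{2}\ne 0$, take logarithms in the denominator: the exponent equals $\tfrac{(1-B_{g,n})}{2\sigma^{2}}n\|\hat{\bm{\beta}}_{g}^{LS}\|_{2}^{2}-\tfrac{m_{g}}{2}\log(1+n\tau_{g,n}^{2})$, which is $\tfrac{n}{2\sigma^{2}}\|\bm{\beta}_{g}^{0}\|_{2}^{2}(1+o_{p}(1))-O(\log(n\tau_{g,n}^{2}))$; the assumption $\log(\tau_{g,n}^{2})/n\to 0$ makes the first term dominate, so $l_{g,n}\stackrel{p}{\to}0$ and consequently $Q_{g,n}\stackrel{p}{\to}\Phi^{-1}(1/2)=0$. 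Combined with $(1-B_{g,n})|\hat{\beta}_{gj}^{LS}|\stackrel{p}{\to}|\beta_{gj}^{0}|>0$ for some $j$, this gives $\|\hat{\bm{\beta}}_{g}^{Med}\|_{2}>0$ with probability tending to one. Taking the finite intersection over $g=1,\dots,G$ yields $P(\mathcal{A}_{n}^{Med}=\mathcal{A})\to 1$.

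For asymptotic normality, I would restrict to the active set $g\in\mathcal{A}$ and write, with $T_{n}=\sqrt{n}\hat{\beta}_{gj}^{LS}$, $c_{n}=1-B_{g,n}$, $d_{n}=\sigma Q_{g,n}\sqrt{1-B_{g,n}}$,
\[
\sqrt{n}\,\hat{\beta}_{gj}^{Med}=\mathrm{sgn}(T_{n})\bigl(c_{n}|T_{n}|-d_{n}\bigr)_{+}=T_{n}+\mathrm{sgn}(T_{n})\bigl[(c_{n}|T_{n}|-d_{n})_{+}-|T_{n}|\bigr].
\]
The bracketed remainder is bounded in absolute value by $|c_{n}-1||T_{n}|+d_{n}+d_{n}/c_{n}$. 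Since $c_{n}\to 1$ deterministically, $d_{n}\stackrel{p}{\to}0$ on active groups, and $|T_{n}|=O_{p}(1)$ when $\beta_{gj}^{0}=0$ while $|T_{n}|\sim\sqrt{n}|\beta_{gj}^{0}|$ with $\sqrt{n}B_{g,n}\to 0$ when $\beta_{gj}^{0}\ne 0$, the remainder is $o_{p}(1)$ in either case. Hence $\sqrt{n}(\hat{\beta}_{gj}^{Med}-\beta_{gj}^{0})=\sqrt{n}(\hat{\beta}_{gj}^{LS}-\beta_{gj}^{0})+o_{p}(1)$, and orthogonality of $\bm{X}$ delivers the joint limit $\bm{N}(\bm{0},\sigma^{2}\bm{I})$ via Slutsky's theorem applied componentwise.

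The main obstacle I anticipate is showing cleanly that $l_{g,n}\to 0$ for active groups: one needs the exponential signal term to outrun the slowly varying penalty $(1+n\tau_{g,n}^{2})^{-m_{g}/2}$ uniformly, which is precisely where the assumption $\log(\tau_{g,n}^{2})/n\to 0$ is used. The thresholding remainder analysis in the normality step is then routine bookkeeping once $Q_{g,n}\stackrel{p}{\to}0$ and $\sqrt{n}B_{g,n}\to 0$ are in hand.
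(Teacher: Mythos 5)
Your proposal is correct and follows essentially the same route as the paper: establish $l_{g,n}\stackrel{p}{\to}1$ on inactive groups and $l_{g,n}\stackrel{p}{\to}0$ on active groups (the latter via $\log(\tau_{g,n}^{2})/n\to 0$), deduce the thresholding behavior from the explicit soft-thresholding formula, and then show $\sqrt{n}(\hat{\beta}_{gj}^{Med}-\hat{\beta}_{gj}^{LS})\stackrel{p}{\to}0$ on the active set using $\sqrt{n}B_{g,n}\to 0$ and $Q_{g,n}\stackrel{p}{\to}0$. If anything, your treatment of active groups is slightly more careful than the paper's, since you only require \emph{some} component with $\beta_{gj}^{0}\neq 0$ to survive thresholding, whereas the paper's displayed product over all $j=1,\dots,m_{g}$ implicitly assumes every coordinate in an active group is nonzero.
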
 \begin{proof} First we observe that $\lim_{n\rightarrow\infty}\sqrt{n}B_{g,n}=0$
since $\sqrt{n}\tau_{g,n}^{2}\rightarrow\infty$ as $n\rightarrow\infty$,
$g=1,\dots,G$.

For $g$ such that $\|\bm{\beta}_{g}^{0}\|_{2}=0$, since
$\sqrt{n}\hat{\bm{\beta}}_{g}^{LS}=O_{p}(1)$
and $n\tau_{g,n}^{2}\rightarrow\infty$, $l_{g,n}\stackrel{p}{\rightarrow}1$
as $n\rightarrow\infty$. The probability of correctly classifying
this factor is
\begin{align*}
P\left(\|\hat{\bm{\beta}}_{g}^{Med}\|_{2}=0\right) & =P\left(\left(1-B_{g,n}\right)|\hat{\beta}_{gj}^{LS}|\leq\frac{\sigma}{\sqrt{n}}Q_{g,n}\sqrt{1-B_{g,n}},j=1,...,m_{g}\right)\\[4pt]
 & =\prod_{j=1}^{m_{g}}P\left(T_{g,n}^{j}\leq1\right)\\[4pt]
 & \rightarrow1\text{ as }n\rightarrow\infty
\end{align*}
where $T_{g,n}^{j}\stackrel{\triangle}{=}\sigma\sqrt{1-B_{g,n}}\cdot\sqrt{n}\hat{|\beta}_{gj}^{LS}|/Q_{g,n}\stackrel{p}{\rightarrow}0$
for all $1\leq j\leq m_{g}$ by Slutsky's theorem.

For $g$ such that $\|\bm{\beta}_{g}^{0}\|_{2}\neq0$, since $\hat{\bm{\beta}}_{g}^{LS}\stackrel{p}{\rightarrow}\bm{\beta}_{g}^{0}$
and $\log(\tau_{g,n}^{2})/n\rightarrow0$, $l_{g,n}\stackrel{p}{\rightarrow}0$
as $n\rightarrow\infty$. The probability of correctly identifying
this factor is
\begin{align*}
P\left(\|\hat{\bm{\beta}}_{g}^{Med}\|_{2}\neq0\right) & =P\left(\left(1-B_{g,n}\right)|\hat{\beta}_{gj}^{LS}|>\frac{\sigma}{\sqrt{n}}Q_{g,n}\sqrt{1-B_{g,n}},j=1,...,m_{g}\right)\\[4pt]
 & =\prod_{j=1}^{m_{g}}P\left(1/T_{g,n}^{j}<1\right)\\[4pt]
 & \rightarrow1\text{ as }n\rightarrow\infty
\end{align*}
where $1/T_{g,n}^{j}\stackrel{p}{\rightarrow}0$ for all $1\leq j\leq m_{g}$
by Slutsky's theorem. Thus we have proved variable selection consistency.
For asymptotic normality, we only need to show that
$\sqrt{n}(\hat{\beta}_{gj}^{Med}-\hat{\beta}_{gj}^{LS})
\stackrel{p}{\rightarrow}0$,
and then the result follows from the fact that
$\sqrt{n}(\hat{\beta}_{gj}^{LS}-\beta_{gj}^{0})\stackrel{d}{\rightarrow}N
(0,\sigma^{2})$.
Note that $\sqrt{n}B_{g,n}\rightarrow0$, $\hat{\bm{\beta}}_{g}^{LS}\stackrel{p}{\rightarrow}\bm{\beta}_{g}^{0}$,
$l_{g,n}\rightarrow0$ and $\sqrt{n}I(T_{g,n}^{j}\leq1)\stackrel{p}{\rightarrow}0$.
Then
\begin{align*}
&\left|\sqrt{n}\left(\hat{\beta}_{gj}^{Med}-\hat{\beta}_{gj}^{LS}\right)\right| \\
&\quad{} =\left(\sqrt{n}B_{g,n}|\hat{\beta}_{gj}^{LS}|-\sqrt{1-B_{g,n}}Q_{g,n}\right)I\left(T_{g,n}^{j}>1\right)+\sqrt{n}|\hat{\beta}_{gj}^{LS}|I\left(T_{g,n}^{j}\leq1\right)\\
&\quad{} \stackrel{p}{\rightarrow}0%
\end{align*}%
by Slutsky's theorem. Therefore, we conclude
$\sqrt{n}(\hat{\bm{\beta}}_{\mathcal{A}}^{Med}-\bm{\beta}_{\mathcal{A}}^{0})
\stackrel{d}{\rightarrow}\bm{N}
(\bm{0},\sigma^{2}I)$.
\end{proof}

\subsection{Gibbs Sampler\label{sub:BGL-SS-Sampler}}

The full posterior distribution of all the unknown parameters conditional
on data is
\begin{align*}
&{} p(\bm{\beta},\bm{\tau}^{2},\sigma^{2},\pi_{0}|\bm{Y},\bm{X})\\
&\quad{} \propto(\sigma^{2})^{-\frac{n}{2}}\exp\left\{ -\frac{1}{2\sigma^{2}}(\bm{Y}-X\bm{\beta})^{T}(\bm{Y}-X\bm{\beta})\right\} \\
&\quad{} \times\prod_{g=1}^{G}\left[(1-\pi_{0})(2\pi\sigma^{2}\tau_{g}^{2})^{-\frac{m_{g}}{2}}\exp\left\{ -\frac{\bm{\beta}_{g}^{T}\bm{\beta}_{g}}{2\sigma^{2}\tau_{g}^{2}}\right\} I\left[\bm{\beta}_{g}\neq\bm{0}\right]+\pi_{0}\delta_{0}(\bm{\beta}_{g})\right]\\
&\quad{} \times\prod_{g=1}^{G}\left(\lambda^{2}\right)^{\frac{m_{g}+1}{2}}\left(\tau_{g}^{2}\right)^{\frac{m_{g}+1}{2}-1}\exp\left(-\frac{\lambda^{2}}{2}\tau_{g}^{2}\right)\\[3pt]
&\quad{} \times\pi_{0}^{a-1}\left(1-\pi_{0}\right)^{b-1}\\[3pt]
&\quad{} \times\left(\sigma^{2}\right)^{-\alpha-1}\exp\left\{ -\frac{\gamma}{\sigma^{2}}\right\}.
\end{align*}
We utilize an efficient block Gibbs sampler \citep{hobert_geometric_1998}
to simulate from the posterior distribution above. To estimate the
highest posterior probability model, we record the model selected
at each simulation and tabulate them to find the model that appears
most often. Let $\bm{\beta}_{(g)}$ denote the $\bm{\beta}$ vector
without the $g$th group, that is,
\begin{align*}
\bm{\beta}_{(g)}=\left(\bm{\beta}_{1}^{T},\dots,\bm{\beta}_{g-1}^{T},\bm{\beta}_{g+1}^{T},\dots,\bm{\beta}_{G}^{T}\right)^{T} & .
\end{align*}
Let $\bm{X}_{(g)}$ denote the covariate matrix corresponding to $\bm{\beta}_{(g)}$,
that is,
\begin{align*}
\bm{X}_{(g)}=\left(\bm{X}_{1},\dots,\bm{X}_{g-1},\bm{X}_{g+1},\dots,\bm{X}_{G}\right) & ,
\end{align*}
where $\bm{X}_{g}$ is the design matrix corresponding to $\bm{\beta}_{g}$.

The Gibbs Sampler we used to generate from the posterior distribution
is given below
\begin{itemize}
\item Let $\bm{\mu}_{g}=\bm{\Sigma}_{g}\bm{X}_{g}^{T}(\bm{Y}-\bm{X}_{(g)}\bm{\beta}_{(g)}),\bm{\Sigma}_{g}=(\bm{X}_{g}^{T}\bm{X}_{g}+\frac{1}{\tau_{g}^{2}}\bm{I}_{m_{g}})^{-1}$,
then the conditional posterior distribution of $\bm{\beta}_{g}$ is
a spike and slab distribution,
\begin{align*}
\bm{\beta}_{g}|\text{rest} & \sim(1-l_{g})\bm{N}(\bm{\mu}_{g},\sigma^{2}\bm{\Sigma}_{g})+l_{g}\delta_{0}(\bm{\beta}_{g}),\quad g=1,\dots,G,
\end{align*}
where
\begin{align*}
l_{g} & =p(\bm{\beta}_{g}=0|\text{rest})\\
 & =\frac{\pi_{0}}{\pi_{0}+(1-\pi_{0})(\tau_{g}^{2})^{-\frac{m_{g}}{2}}|\bm{\Sigma}_{g}|^{\frac{1}{2}}\exp\left\{ \frac{1}{2\sigma^{2}}\|\bm{\Sigma}_{g}^{\frac{1}{2}}X_{g}^{T}(\bm{Y}-\bm{X}_{(g)}\bm{\beta}_{(g)})\|_{2}^{2}\right\} }.
\end{align*}
\end{itemize}
\begin{rem} $\bm{Y}-\bm{X}_{(g)}\bm{\beta}_{(g)}$ is the residual
vector when we exclude the $g$th factor $\bm{\beta}_{g}$ in our
regression model. Each element of $X_{g}^{T}(\bm{Y}-\bm{X}_{(g)}\bm{\beta}_{(g)})$
is proportional to the correlation between the each covariate in the
$g$th group and this residual vector. \end{rem}
\begin{itemize}
\item Let $\alpha_{g}^{2}=\frac{1}{\tau_{g}^{2}},\ g=1,2,\dots,G$. Then
\begin{align*}
\alpha_{g}^{2}|\text{rest}\sim\begin{cases}
\text{Inverse Gamma}\left(\text{shape}=\frac{m_{g}+1}{2},\text{scale}=\frac{\lambda^{2}}{2}\right),\text{ if}\;\bm{\beta}_{g}=0,\\
\text{Inverse Gaussian}\left(\frac{\lambda\sigma}{\|\beta_{g}\|_{2}},\lambda^{2}\right),\text{ if}\;\bm{\beta}_{g}\neq0.
\end{cases}
\end{align*}
\item
\begin{align*}
\sigma^{2}|rest
\sim\text{Inverse Gamma}\biggl(&\frac{n}{2}+\frac{1}{2}\sum_{g=1}^{G}m_{g}Z_{g}+\alpha,\\
&\frac{1}{2}[(\bm{Y}-\bm{X}\bm{\beta})^{T}(\bm{Y}-\bm{X}\bm{\beta})+\bm{\beta}^{T}\bm{D}_{\tau}^{-1}\bm{\beta}]+\gamma\biggr)
\end{align*}
\qquad
where $Z_{g}=\begin{cases}
1,\text{ if}\;\bm{\beta}_{g}\neq\bm{0},\\
0,\text{ if}\;\bm{\beta}_{g}=\bm{0},
\end{cases}\ \bm{D}_{\tau}=\mathrm{diag}\{\tau_{1}^{2},\tau_{2}^{2},\ldots,\tau_{G}^{2}\}$.
\item
\begin{align*}
\pi_{0}|\text{rest}\sim\text{Beta}\left(a+\sum_{g=1}^{G}Z_{g},b+\sum_{g=1}^{G}m_{g}-\sum_{g=1}^{G}Z_{g}\right).
\end{align*}
\end{itemize}

\section{Bi-level Selection\label{sec:bi-level-selection}}

We have introduced BGL-SS for group level variable selection in the
last section but it is not always suitable for the problem. In many
applications, it may be desirable to select variables at both the
group level and the individual level. In a genetic association study
\citep{huang_selective_2012}, for example, genetic variations in
the same gene form a natural group. But one genetic variation related
to the disease does not necessarily mean that all the other variations
in the same gene are also associated with the disease. We propose
methods for selecting variables simultaneously at both levels in this section.

\subsection{Bayesian Sparse Group Lasso (BSGL)\label{sub:BSGL}}

\subsubsection{Model Formulation}

With a combination of $L_{1}$- and $L_{2}$-penalty, the sparse group
lasso \citep{simon_sparse-group_2012} has the desirable property of
both group-wise sparsity and within group sparsity. Assuming the following
independent multivariate priors on each group of regression coefficients in \eqref{group_linear_regression},
\begin{align}
\pi\left(\bm{\beta}_{g}\right)\propto\exp\left\{ -\frac{\lambda_{1}}{2\sigma^{2}}\|\bm{\beta}_{g}\|_{1}-\frac{\lambda_{2}}{2\sigma^{2}}\|\bm{\beta}_{g}\|_{2}\right\} ,\quad g=1,2,\dots,G,\label{eq:psql}
\end{align}
then the sparse group lasso estimator in \eqref{eq:sgl} is equivalent
to the MAP solution under this prior.

To find a Bayesian representation of the sparse group lasso where all
posterior conditionals are of standard form and thus greatly simplify
computation, we follow the approach of \citet{park_bayesian_2008}
and \citet{kyung_penalized_2010}, and express the prior as a two
level hierarchical structure including independent $\bm{0}$ mean
Gaussian priors on $\bm{\beta}_{g}$'s with parameters $\bm{\tau}_{g},\gamma_{g}$
and hyperpriors on $\bm{\tau}_{g},\gamma_{g}$.

To enable shrinkage both at the group level and within a group, we propose
the following Bayesian hierarchical model which we refer to as Bayesian
sparse group lasso (BSGL).
\begin{align}
\bm{Y}|\bm{\beta},\sigma^{2} & \sim N\left(\bm{X}\bm{\beta},\sigma^{2}\bm{I}_{n}\right),\\
\bm{\beta}_{g}|\bm{\tau}_{g},\gamma_{g},\sigma^{2} & \sim N\left(\bm{0},\sigma^{2}\bm{V}_{g}\right),\quad g=1,\dots,G,\label{eq:sgl_prior_L1}
\end{align}
where
$\bm{V}_{g}=\mathrm{diag}\{(\frac{1}{\tau_{gj}^{2}}+\frac{1}{\gamma_{g}^{2}})^{-1},\ j=1,2,\dots,m_{g}\}$.
Then we place the following multivariate prior on $\bm{\tau}_{g},\gamma_{g}$
\begin{align}
\pi\left(\tau_{g1}^{2},\dots,\tau_{gm_{g}}^{2},\gamma_{g}^{2}\right)  ={}&c_{g}\left(\lambda_{1}^{2},\lambda_{2}^{2}\right)\prod_{j=1}^{m_{g}}\left[\left(\tau_{gj}^{2}\right)^{-\frac{1}{2}}\left(\frac{1}{\tau_{gj}^{2}}+\frac{1}{\gamma_{g}^{2}}\right)^{-\frac{1}{2}}\right]\left(\gamma_{g}^{2}\right)^{-\frac{1}{2}}\label{eq:sgl_prior_on_variances}\\
 & \times\exp\left\{ -\frac{\lambda_{1}^{2}}{2}\sum_{j=1}^{m_{g}}\tau_{gj}^{2}-\frac{\lambda_{2}^{2}}{2}\gamma_{g}^{2}\right\} .\nonumber
\end{align}
Although this prior has a complicated form and an unknown normalizing
constant depending on $\lambda_{1}$ and $\lambda_{2}$, all the resulting
full conditionals in the Gibbs sampler are standard distributions
and thus are easy and fast to sample from. The propriety of the prior
given in \eqref{eq:sgl_prior_on_variances} is proved in the appendix.

With above hierarchical priors, the marginal prior on $\bm{\beta}_{g}$
is
\[
\pi\left(\bm{\beta}_{g}|\sigma^{2}\right)\propto\exp\left\{ -\frac{\lambda_{1}}{\sigma}\|\bm{\beta}_{g}\|_{1}-\frac{\lambda_{2}}{\sigma}\|\bm{\beta}_{g}\|_{2}\right\} ,
\]
which is a prior of the form \eqref{eq:psql} with our two level hierarchical
prior specification.

\subsubsection{Hyperparameter Specification}

The specification of hyperparameters $\lambda_{1}^{2},\lambda_{2}^{2}$
is very important because it expresses our prior belief of sparsity
and the amount of shrinkage. We place a hyper-prior on them instead
of imposing fixed values. Define
$C
(\lambda_{1}^{2},\lambda_{2}^{2})=\prod_{g=1}^{G}c_{g}
(\lambda_{1}^{2},\lambda_{2}^{2})$.
The following prior is assigned to $\lambda_{1}^{2}$ and $\lambda_{2}^{2}$,
\[
p\left(\lambda_{1}^{2},\lambda_{2}^{2}\right)\propto C^{-1}\left(\lambda_{1}^{2},\lambda_{2}^{2}\right)\left(\lambda_{1}^{2}\right)^{p}\left(\lambda_{2}^{2}\right)^{G/2}\exp\left\{ -d_{1}\lambda_{1}^{2}-d_{2}\lambda_{2}^{2}\right\} ,
\]
where $d_{1}>0,d_{2}>0$. It is easy to show that this prior is proper.
To make it a moderately diffuse prior, we specify small values for
$d_{1}$ and $d_{2}$, $d_{1}=d_{2}=10^{-1}$.

\subsection{Bayesian Sparse Group Selection with Spike and Slab Prior (BSGS-SS)\label{sub:BSGL-SS}}

Although the Bayesian sparse group lasso has shrinkage effects at
both the group level and also within a group, it does not produce
sparse model since the posterior mean/median estimators are never
exact 0. To achieve sparsity at both levels for variable selection purpose, and to improve out-of-sample
prediction performance, we propose the Bayesian Sparse Group Selection
with Spike and Slab prior (BSGS-SS), which utilizes spike and slab
type priors for both group variable selection and individual variable
selection. The difficulty of this problem lies in how to introduce
both types of sparsity with spike and slab priors.

\subsubsection{Model Specification}

We reparametrize the coefficients vectors to tackle the two kinds
of sparsity separately:
\begin{align}
\bm{\beta}_{g}=\bm{V}_{g}^{\frac{1}{2}}\bm{b}_{g},
\ \text{where }\bm{V}_{g}^{\frac{1}{2}}=\mathrm{diag}\left\{ \tau_{g1},\dots,\tau_{gm_{g}}\right\},\ \tau_{gj}\geq0,\ g=1,\dots,G;\ j=1,\dots,m_{g},
\end{align}
where $\bm{b}_{g}$, when nonzero, has a $\bm{0}$ mean multivariate
normal distribution with identity matrix as its covariance matrix.
Thus the diagonal elements of $\bm{V}_{g}^{\frac{1}{2}}$ control
the magnitude of elements of $\bm{\beta}_{g}$. To select variables
at the group level, we assume the following multivariate spike and
slab prior for each $\bm{b}_{g}$:
\begin{align}
\bm{b}_{g}\stackrel{ind}{\sim}\left(1-\pi_{0}\right)\bm{N}_{m_{g}}\left(\bm{0},\bm{I}_{m_{g}}\right)+\pi_{0}\delta_{0}\left(\bm{b}_{g}\right),\quad g=1,\dots,G.
\end{align}
Note that when $\tau_{gj}=0$, $\beta_{gj}$ is essentially dropped
out of the model even when $b_{gj}\neq0$. So in order to choose variables
within each relevant group, we assume the following spike and slab
prior for each $\tau_{gj}$:
\begin{align}
\tau_{gj}\stackrel{ind}{\sim}\left(1-\pi_{1}\right)N^{+}\left(0,s^{2}\right)+\pi_{1}\delta_{0}\left(\tau_{gj}\right),\quad g=1,\dots,G;\ j=1,\dots,m_{g},
\end{align}
where $N^{+}(0,s^{2})$ denotes a normal $N(0,s^{2})$
distribution truncated below at 0. Note that this truncated normal
distribution has mean $\sqrt{\frac{2}{\pi}}s$ and variance $s^{2}$.
\begin{rem} If $m_{g}=1$, $\beta_{g}=\tau_{g}b_{g}$ is a scalar,
and still has a spike and slab distribution. The prior probability
of $\beta_{g}=0$ is $1-(1-\pi_{0})(1-\pi_{1})$,
which is larger than both $\pi_{0}$ and $\pi_{1}$, but smaller than
$\pi_{0}+\pi_{1}$. As a comparison, the sparse group lasso penalty
for the $g$th group of coefficients becomes $(\lambda_{1}+\lambda_{2})\|\beta_{g}\|_{1}$
when $m_{g}=1$. Thus the penalty parameter is the sum of the individual
level penalty parameter $\lambda_{1}$, and the group level penalty
parameter $\lambda_{2}$. \end{rem}\smallskip

\begin{rem} Alternatively, we could enforce both types of sparsity
by generalizing the binary masking model of \citet{kuo_variable_1998}.
We can reparameterize the regression coefficients as $\beta_{gj}=\gamma_{g}^{(1)}\gamma_{gj}^{(2)}b_{gj}$, where $\gamma_{g}^{(1)}$ is a binary indicator of whether
the $g$th group of coefficients are all 0, and $\gamma_{gj}^{(2)}$
indicates whether $\beta_{gj}=0$. The following priors are assumed:
\begin{align*}
\gamma_{g}^{\left(1\right)} & \sim\text{Bernoulli}\left(\pi_{0}\right),\quad g=1,\dots,G,\\
\gamma_{gj}^{\left(2\right)} & \sim\text{Bernoulli}\left(\pi_{1}\right),\quad g=1,\dots,G;\ j=1,\dots,m_{g},\\
b_{gj} & \sim N\left(0,s^{2}\right),\quad g=1,\dots,G;\ j=1,\dots,m_{g}.
\end{align*}
We expect that the above alternative formulation to have comparable
performance with the BSGS-SS model that we proposed. \citet{stingo_incorporating_2011}
also uses two sets of binary indicators for group and individual level
selection for a more specific group selection problem, in which groups
may be overlapping and certain dependence structure among variables
exists. \end{rem}

Instead of specifying fixed values for hyperparameters, typical non-informative
priors are used. We assume an inverse gamma prior for the error variance
$\sigma^{2}$, where shape and scale parameters are chosen to be relatively
small:
\begin{align}
\sigma^{2}\sim\text{Inverse Gamma}\left(\alpha,\gamma\right),\quad \alpha=0.1,\ \gamma=0.1.
\end{align}
To decide the values of hyperparameters $\pi_{0},\pi_{1}$, we assume
conjugate beta hyper-priors:
\begin{equation}
\pi_{0}\sim\text{Beta}\left(a_{1},a_{2}\right),\quad  \pi_{1}\sim\text{Beta}\left(c_{1},c_{2}\right).
\end{equation}
For $s^{2}$, we place a conjugate inverse gamma prior on it,
\[
s^{2}\sim\text{Inverse Gamma}\left(1,t\right),
\]
and estimate $t$ with the Monte Carlo EM algorithm \citep{casella_empirical_2001,park_bayesian_2008}.
For the $k$th EM update,
\[
t^{\left(k\right)}=\frac{1}{E_{t^{\left(k-1\right)}}\left[\frac{1}{s^{2}}\mid\bm{Y}\right]},
\]
where the posterior expectation of $\frac{1}{s^{2}}$ is estimated
from the Gibbs samples based on $t^{(k-1)}$.

Therefore, with the above model specification, the joint posterior
of $\bm{b},\tau^{2},\sigma^{2},\pi_{0},\pi_{1}$ conditional on observed
data is
\begin{align*}
&{}p\left(\bm{b},\bm{\tau}^{2},\sigma^{2},\pi_{0},\pi_{1},s^{2}\mid\bm{Y},\bm{X}\right)\\
&\quad{}\propto \left(\sigma^{2}\right)^{-\frac{n}{2}}\exp\left\{ -\frac{1}{2\sigma^{2}}
\biggl\|\bm{Y}-\sum_{g=1}^{G}\bm{X}_{g}\bm{V}_{g}^{\frac{1}{2}}\bm{b}_{g}\biggr\|_{2}^{2}\right\} \\
&\quad{}\times  \prod_{g=1}^{G}\left[\left(1-\pi_{0}\right)\left(2\pi\right)^{-\frac{mg}{2}}\exp\left\{ -\frac{1}{2}\bm{b}_{g}^{T}\bm{b}_{g}\right\} I\left[\bm{b}_{g}\neq0\right]+\pi_{0}\delta_{0}\left(\bm{b}_{g}\right)\right]\\
&\quad{}\times  \prod_{g=1}^{G}\prod_{j=1}^{m_{g}}\left[\left(1-\pi_{1}\right)\cdot2\left(2\pi s^{2}\right)^{-\frac{1}{2}}\exp\left\{ -\frac{\tau_{gj}^{2}}{2s^{2}}\right\} I\left[\tau_{gj}>0\right]+\pi_{1}\delta_{0}\left(\tau_{gj}\right)\right]\\
&\quad{}\times  \left(\sigma^{2}\right)^{-\alpha-1}\exp\left\{ -\frac{\gamma}{\sigma^{2}}\right\} \\
&\quad{}\times  \pi_{0}^{a_{1}-1}\left(1-\pi_{0}\right)^{a_{1}-1}\\
&\quad{}\times  \pi_{1}^{c_{1}-1}\left(1-\pi_{1}\right)^{c_{1}-1}\\
&\quad{}\times  t\left(s^{2}\right)^{-2}\exp\left\{ -\frac{t}{s^{2}}\right\}.
\end{align*}

\subsubsection{Gibbs Sampler}

Similar to Subsection \ref{sub:BGL-SS-Sampler}, we define the coefficients
vector without the $j$th element in the $g$th group as
\begin{align*}
\bm{\beta}_{(gj)}=\left(\beta_{11},\dots,\beta_{1m_{1}},\dots,\beta_{g1},\dots,\beta_{g,j-1},\beta_{g,j+1},\dots,\beta_{gm_{g}},\dots,\beta_{Gm_{G}}\right)^{T},
\end{align*}
and the covariates matrix corresponding to $\bm{\beta}_{(gj)}$
as
\begin{align*}
\bm{X}_{(gj)}=\left(\bm{x}_{11},\dots,\bm{x}_{1m_{1}},\dots,\bm{x}_{g1},\dots,\bm{x}_{g,j-1},\bm{x}_{g,j+1},\dots,\bm{x}_{gm_{g}},\dots,\bm{x}_{Gm_{G}}\right).
\end{align*}

\begin{itemize}
\item The posterior distribution of $\bm{b}_{g}$ conditioning on everything
else is still a multivariate spike and slab distribution,
\begin{align*}
\bm{b}_{g}\mid\text{rest}\sim l_{g}\delta_{0}\left(\bm{b}_{g}\right)+\left(1-l_{g}\right)\bm{N}_{m_{g}}\left(\bm{\mu}_{g},\bm{\Sigma}_{g}\right) & ,
\end{align*}
where $l_{g}$, the posterior probability of $\bm{b}_{g}$ equal to
$\bm{0}$ given the remaining parameters, is
\begin{align*}
l_{g} & =P\left(\bm{b}_{g}=\bm{0}|\text{rest}\right)\\
 & =\frac{\pi_{0}}{\pi_{0}+\left(1-\pi_{0}\right)\mid\bm{\Sigma}\mid^{\frac{1}{2}}\exp\left\{ \frac{1}{2\sigma^{4}}\parallel\bm{\Sigma}_{g}^{\frac{1}{2}}\bm{V}_{g}^{\frac{1}{2}}\bm{X}_{g}^{T}\left(\bm{Y}-\bm{X}_{\left(g\right)}V_{\left(g\right)}^{\frac{1}{2}}\bm{b}_{\left(g\right)}^{\frac{1}{2}}\right)\parallel_{2}^{2}\right\} },
\end{align*}
$\bm{\mu}_{g}=\frac{1}{\sigma^{2}}\bm{\Sigma}_{g}^{\frac{1}{2}}\bm{V}_{g}^{\frac{1}{2}}\bm{X}_{g}^{T}
(\bm{Y}-\bm{X}_{(g)}V_{(g)}^{\frac{1}{2}}\bm{b}_{(g)}^{\frac{1}{2}})$,
and $\bm{\Sigma}_{g}=(\bm{I}_{m_{g}}+\frac{1}{\sigma^{2}}\bm{V}_{g}^{\frac{1}{2}}\bm{X}_{g}^{T}\bm{X}_{g}\bm{V}_{g}^{\frac{1}{2}})^{-1}$.
\item The conditional posterior of $\tau_{gj}$ is a spike and slab distribution,
with the slab a positive part normal distribution:
\begin{align*}
\tau_{gj}\mid\text{rest}\sim q_{gj}\delta_{0}\left(\tau_{gj}\right)+\left(1-q_{gj}\right)
\bm{N}^{+}\left(u_{gj},v_{gj}^{2}\right),\  g=1,2,\dots,G;\,j=1,2,\dots,m_{G},
\end{align*}
where $u_{gj}=\frac{1}{\sigma^{2}}v_{gj}^{2}
(\bm{Y}-
\bm{X}_{(gj)}
\bm{\beta}_{(gj)}
)^{T}\bm{X}_{gj}b_{gj}$,
$v_{gj}^{2}=
(\frac{1}{s^{2}}+\frac{1}{\sigma^{2}}\bm{X}_{gj}^{T}\bm{X}_{gj}b_{gj}^{2}
)^{-1}$
and
\begin{align*}
q_{gj}=p\left(\tau_{gj}=0\mid \text{rest}\right)=\frac{\pi_{1}}{\pi_{1}+2\left(1-\pi_{1}\right)\left(s^{2}\right)^{-\frac{1}{2}}\left(v_{gj}^{2}\right)^{\frac{1}{2}}\exp\left\{ \frac{u_{gj}^{2}}{2v_{gj}^{2}}\right\} \left[\Phi\left(\frac{u_{gj}}{v_{gj}}\right)\right]}.
\end{align*}
\item
\begin{align*}
\sigma^{2}|\text{rest}\sim\text{Inverse Gamma}\left(\frac{n}{2}+\alpha,\frac{1}{2}\parallel\bm{Y}-\bm{X}\bm{\beta}\parallel_{2}^{2}+\gamma\right).
\end{align*}
\item With conjugate Beta priors, the posteriors of $\pi_{0}$ and $\pi_{1}$
conditional on everything else continue to be Beta distributions:
\begin{align*}
\pi_{0}\mid\text{rest}&\sim\text{Beta}\left(\#\left(\bm{b}_{g}=0\right)+a_{1},\#\left(\bm{b}_{g}\neq0\right)+a_{2}\right),\\
\pi_{1}\mid\text{rest}&\sim\text{Beta}\left(\#\left(\tau_{gj}=0\right)+c_{1},\#\left(\tau_{gj}\neq0\right)+c_{2}\right).
\end{align*}

\item With conjugate inverse gamma prior, the conditional posterior of $s^{2}$
is still an inverse gamma distribution:
\[
s^{2}\mid\text{rest}\sim\text{Inverse Gamma}\left(1+\frac{1}{2}\#\left(\tau_{gj}=0\right),t+\frac{1}{2}\sum_{g,j}\tau_{gj}^{2}\right).
\]

\end{itemize}

\section{Simulation\label{sec:Simulation}}

\noindent We simulate data from the following true model:
\begin{align*}
\bm{Y}=\bm{X}\bm{\beta}+\bm{\epsilon},\text{ where }\epsilon_{i}\stackrel{iid}{\thicksim}N(0,\sigma^{2}),\ i=1,2,\dots,n.
\end{align*}
For the following examples, we compare the variable selection accuracy
and prediction performance of BGL-SS, BSGL, BSGS-SS with 4 other models:
linear regression, the Group Lasso (GL), the Sparse Group Lasso (SGL) and the Bayesian Group
Lasso (BGL), when applicable. Five examples are considered in our
simulations. The third one is from the original lasso paper \citep{tibshirani_regression_1996}.
\begin{itemize}
\item Example 1. We simulate a data set with 100 observations and 20 covariates,
which are divided into 4 groups with 5 covariates each. We randomly
sample 60 observations to train the model and use the remaining 40
to compare the prediction performance of proposed model with other
lasso variations. Let
\begin{align*}
\bm{\beta}=\left(\left(0.3,-1,0,0.5,0.01\right),\bm{0},\left(0.8,0.8,0.8,0.8,0.8\right),\bm{0}\right),
\end{align*}
where $\bm{0}$ is the 0 vector of length 5. The pairwise correlation
between covariates $x_{i}$ and $x_{j}$ is 0.5 for $i\neq j$. We
specify $\sigma=3$.
\item \noindent Example 2. This example is a large $p$ small $n$ problem
with $n=60$ and $p=80$. 40 observations are randomly sampled to train
the model and the remaining 20 are used to compare the prediction
performance. 80 predictors are grouped into 16 groups of 5 covariates
each. We define the $j$th predictor in group $g$ as $X_{gj}=z_{g}+z_{gj}$,
where $z_{g}$ and $z_{gj}$ are independent standard normal variates,
$g=1,\dots,16;\ j=1,2,\dots,5$. Thus predictors within a group are correlated
with pairwise correlation $\frac{1}{2}$ while the predictors in different
groups are independent. Let
\begin{align*}
\bm{\beta}=\left(\left(1,2,3,4,5\right),\bm{0},\left(0.1,0.2,0.3,0.4,0.5\right),\bm{0},\bm{0},\bm{0},\bm{0},\bm{0},\bm{0},\bm{0},\bm{0},\bm{0},\bm{0},\bm{0},\bm{0},\bm{0}\right)
\end{align*}
where $\bm{0}$ is the 0 vector of length 5. We use $\sigma=2$.
\item \noindent Example 3. In this example, we simulate a data set with
$n=100$ and $p=40$. 60 observations are used to train the model and the
remaining 40 are used for testing the predictions. Let $\bm{\beta}=(\bm{0},\bm{2},\bm{0},\bm{2})$,
where $\bm{0}$ and $\bm{2}$ are both of length 10, with all elements
0 or 2, respectively. We simulate predictors in the same way as in
Example 2 except for necessary dimension changes. The error standard
deviation $\sigma$ is 2.
\item \noindent Example 4. This example is the same as Example 3 except
the true coefficients
\[
\bm{\beta}=\left(\bm{0},\left(2,2,2,2,2,0,0,0,0,0\right),\bm{0},\left(2,2,2,2,2,0,0,0,0,0\right)\right),
\]
where $\bm{0}$ is a 0 vector of length 10. So this example, like
Example 1, has sparsity at the group level and also sparsity within
nonzero groups.
\item \noindent Example 5. This example is taken from \citet{yuan_model_2006}.
$Z_{1},Z_{2},\dots,Z_{20}$ and $W$ were independently generated from
the standard normal distribution, and we define $X_{i}=\frac{(Z_{i}+W)}{\sqrt{2}}$.
The first 10 covariates are each expanded to a third order polynomial
thus we have 10 factors consisting of third order polynomial terms.
The last 10 covariates are each trichotomized as $0,1,2$ if it is
smaller than $\phi^{-1}(1/3)$, larger than $\phi^{-1}(2/3)$,
or between them. The simulation model is
\[
Y=\left(X_{3}+X_{3}^{2}+X_{3}^{3}\right)+\left(\frac{2}{3}X_{6}-X_{6}^{2}+\frac{1}{3}X_{6}^{3}\right)+2I\left(X_{11}=0\right)+I\left(X_{11}=1\right)+\epsilon
\]
where $\epsilon\sim N(0,2^{2})$. We simulate 200 samples
and use 100 for training and the rest 100 for testing. We have 20
factors with 50 covariates in total.
\end{itemize}
SPArse Modeling Software (SPAM) is the most stable program we have
found for fitting group lasso and sparse group lasso \citep[see][]{mairal_network_2010,jenatton_proximal_2011},
and we use 5-fold cross-validation to choose optimal $\lambda$s.
For BGL-SS, we have conjugate prior on $\pi_{0}$, so we only need
to specify suitable hyperparameters, which we choose $a=1,b=1$. For
BSGS-SS, we have beta priors on both $\pi_{0}$ and $\pi_{1}$, and
we set $a_{1}=a_{2}=c_{1}=c_{2}=1$. For Bayesian models, we generate
from the full posterior distribution with a Gibbs Sampler running
10000 iterations in which the first 5000 are burn-ins. Posterior mean
and posterior median are both used as our Bayes estimators and we
will compare their variable selection and prediction performance.
To summarize the prediction errors, we calculate the median mean squared
error in 50 simulations.

\begin{table}[ht]
\centering
\tabcolsep=0pt
\begin{tabular*}{100mm}{@{\extracolsep{\fill}}lrrrrrr}
\hline
& \multicolumn{2}{c}{BGL-SS} & \multicolumn{2}{c}{BSGS-SS} & \multirow{2}{*}{GL}  & \multirow{2}{*}{SGL} \\
\cline{2-3} \cline{4-5}
& MTM & HPPM & MTM & HPPM &  &  \\
  \hline
$Example\text{ }1$ \\
TPR & 0.96 & 0.98 & 0.79 & 0.89 & 0.97 & 0.90 \\
  FPR & 0.23 & 0.48 & 0.09 & 0.19 & 0.65 & 0.53 \\
$Example\text{ }2$ \\
  TPR & 0.90 & 0.91 & 0.82 & 0.92 & 0.98 & 0.87 \\
  FPR & 0.06 & 0.12 & 0.02 & 0.02 & 0.39 & 0.16 \\
$Example\text{ }3$ \\
  TPR & 1.00 & 1.00 & 1.00 & 1.00 & 1.00 & 1.00 \\
  FPR & 0.00 & 0.00 & 0.02 & 0.03 & 0.44 & 0.26 \\
$Example\text{ }4$ \\
  TPR & 1.00 & 1.00 & 1.00 & 1.00 & 1.00 & 1.00 \\
  FPR & 0.34 & 0.34 & 0.22 & 0.34 & 0.79 & 0.32 \\
$Example\text{ }5$ \\
  TPR & 0.97 & 0.99 & 0.91 & 0.94 & 0.99 & 0.94 \\
  FPR & 0.14 & 0.54 & 0.02 & 0.02 & 0.40 & 0.30 \\
   \hline
\end{tabular*}
\caption{Mean True/False Positive Rate for six methods in five simulation examples, based on 50 simulations.}
\label{table:var_sel}
\end{table}

In Table \ref{table:var_sel}, we summarize the model selection accuracy
of different methods. For both BGL-SS and BSGS-SS, the median thresholding
model (MTM) and the highest posterior probability model (HPPM) are
compared by true and false positive rate. We also list the group lasso
and sparse group lasso results for comparison. Median thresholding
model, which is more parsimonious, outperforms all other methods including
the corresponding highest posterior probability model. The group lasso
and the sparse group lasso with penalty parameters chosen by cross validation
tend to select much more variables than our spike and slab methods.
\citet{leng_note_2004} showed that when the tuning parameter is selected
by minimizing the prediction error, the lasso procedure is inconsistent
in variable selection in general. It is suspected \citep{wang_note_2008}
that the group lasso may suffer the same variable selection inconsistency
which may explain why the group lasso and the sparse group lasso tends to
select more variables and have higher false positive rate in our simulation.
On the other hand, model selected by median thresholding has very
low false positive rate and even outperforms the gold standard of
Bayesian variable selection -- the highest posterior probability model.

Table \ref{table:prediction} summarizes the median mean squared prediction
error for all 5 simulated examples using 9 methods to fit the simulated
data, based on 50 replications. The bootstrapped standard errors of
the medians are given in the parentheses. A couple of observations
can be made from Table \ref{table:prediction}:
\begin{itemize}
\item BGL-SS is comparable with the group lasso in prediction except in
Example 2, and BSGS-SS outperforms the sparse group lasso in all examples;
\item Posterior mean estimator and posterior median estimator have very
close prediction error;
\item BGL and BSGL does not predict as well as their frequentist counterpart,
GL and SGL;
\item When there is no obvious sparsity within relevant groups, BGL-SS usually
performs favorably or sometimes better than BSGS-SS; but when there
is significant sparsity within relevant groups (Example 4), BSGS-SS
is very good at identifying within group sparsity and thus further
improves the prediction performance from BGL-SS;
\item The fact that BGL-SS does not predict well in Example 2 suggests that
a flat prior with mean $\frac{1}{2}$ on $\pi_{0}$ does not work
well for high-dimensional problems in which most groups of predictors
are 0. We note that it still works much better than the group lasso
in terms of variable selection even with this flat prior.
\end{itemize}
Now we demonstrate the sensitivity of BGL-SS for model selection to
the specification of $\pi_{0}$. We fix $\pi_{0}$ at 0.2, 0.5, 0.8
and assume $\text{Beta}(0.5,0.5)$, $\text{Beta}(1,1)$,
$\text{Beta}(1.5,1.5)$ priors. Table \ref{table:pi_sensitivity}
shows that the misclassification error, the percentage of misclassified
variables, of the median thresholding model and the highest probability
model with different specification of $\pi_{0}$. For comparison we
append the result of the group lasso, with penalty parameter chosen
by cross-validation, in the last row. For all choices of $\pi_{0}$,
the median thresholding model is very stable and misclassifies at
most three variables, while the highest probability model is very
sensitive to the choice of $\pi_{0}$. We also note that although
the misclassification error of the group lasso is much higher, its
prediction error is comparable to the BGL-SS in this example as we
have seen in Table \ref{table:prediction}.

\begin{table}[t!]
\fontsize{9.5}{11.5}\selectfont
\centering
\tabcolsep=0pt
\begin{tabular*}{\textwidth}{@{\extracolsep{\fill}}lrrrrr@{}}
  \hline
 & Example 1 & Example 2 & Example 3 & Example 4 & Example 5 \\
  \hline
BGL-SS with mean & 9.69(0.35) & 6.79(0.39) & 6.45(0.29) & 6.41(0.34) & 5.24(0.17) \\
  BGL-SS with median & 9.76(0.40) & 6.60(0.43) & 6.46(0.25) & 6.40(0.32) & 5.08(0.18) \\
  BSGS-SS with mean & 10.07(0.38) & 5.51(0.21) & 6.83(0.42) & 5.37(0.15) & 4.83(0.16) \\
  BSGS-SS with median & 10.37(0.34) & 5.59(0.32) & 6.51(0.38) & 5.38(0.12) & 4.92(0.15) \\
  Group Lasso & 9.82(0.51) & 5.99(0.33) & 5.91(0.38) & 6.98(0.46) & 5.30(0.16) \\
  Sparse Group Lasso & 10.48(0.55) & 5.75(0.45) & 6.88(0.34) & 5.90(0.28) & 5.22(0.23) \\
  Bayesian Group Lasso & 10.53(0.34) & 8.24(0.51) & 7.89(0.24) & 7.48(0.41) & 6.46(0.23) \\
  Bayesian Sparse Group lasso & 10.08(0.47) & 10.55(0.56) & 10.21(0.37) & 8.65(0.41) & 6.03(0.16) \\
  Linear Regression & 11.19(0.42) & -- & 12.71(0.96) & 12.68(1.03) & 8.71(0.54) \\
   \hline
\end{tabular*}
\caption{Median mean squared error for nine
methods in five simulation examples, based on 50 replications.}
\label{table:prediction}
\end{table}

\begin{table}[t!]
\centering
\tabcolsep=0pt
\begin{tabular*}{116mm}{@{\extracolsep{\fill}}lcccccc}
\hline
& \multicolumn{3}{c}{Fix $\pi_0$} & \multicolumn{3}{c}{Hyperprior} \\
\cline{2-4} \cline{5-7}
& 0.20 & 0.50 & 0.80 & $a=b=0.50$ & $a=b=1.00$ & $a=b=1.50$ \\
   \hline
MTM & 0.10 & 0.05 & 0.10 & 0.15 & 0.10 & 0.10 \\
HPPM & 0.30 & 0.05 & 0.05 & 0.55 & 0.55 & 0.30 \\
GL &  0.55 & 0.55 & 0.55 & 0.55 & 0.55 & 0.55 \\
   \hline
\end{tabular*}
\caption{Sensitivity analysis for BGL-SS using Example 1.}
\label{table:pi_sensitivity}
\end{table}

Posterior mean and median estimators of our spike and slab models
are compared in Table~\ref{table:coef}. Two variations of Example
1, with $\sigma=1$ or $\sigma=3$, respectively, are both fitted by
BGL-SS and BSGS-SS model. For both cases, the posterior median estimators
both produce 0 estimates and correctly identify the two most important
factors. When the signal-to-noise ratio is high, the posterior mean
estimates shrink coefficients of redundant variables to very small
values. But when there is too much noise, posterior mean does not
have enough shrinkage effects to help us with variable selection.
Regarding within group sparsity, $\beta_{3}$ was shrunk to 0 by BSGS-SS
at the cost of shrinking $\beta_{5}$, which has a very small true
value, 0.01.

\begin{table}[t!]
\centering
\tabcolsep=0pt
\begin{tabular*}{\textwidth}{@{\extracolsep{\fill}}rrrrrrrrrr}
\hline
&      & \multicolumn{4}{c}{$\sigma=3$} & \multicolumn{4}{c}{$\sigma=1$} \\
\cline{3-6} \cline{7-10}
&      & \multicolumn{2}{c}{BGL-SS} & \multicolumn{2}{c}{BSGS-SS} & \multicolumn{2}{c}{BGL-SS} & \multicolumn{2}{c}{BSGS-SS} \\
\cline{3-4} \cline{5-6} \cline{7-8} \cline{9-10}
& True & Median & Mean & Median & Mean & Median & Mean & Median & Mean \\
\hline
$\beta_{1}$ & 0.3 & 0.09 & 0.127 & 0 & 0.115 & 0.291 & 0.289 & 0.288 & 0.282 \\
  $\beta_{2}$ & $-$1 & $-$0.611 & $-$0.633 & $-$0.656 & $-$0.666 & $-$1.103 & $-$1.056 & $-$1.274 & $-$1.27 \\
  $\beta_{3}$ & 0 & $-$0.056 & $-$0.086 & 0 & $-$0.03 & $-$0.065 & $-$0.065 & 0 & $-$0.022 \\
  $\beta_{4}$ & 0.5 & 0.619 & 0.637 & 0.861 & 0.812 & 0.695 & 0.675 & 0.789 & 0.788 \\
  $\beta_{5}$ & 0.01 & 0 & 0.011 & 0 & 0.014 & $-$0.01 & $-$0.008 & 0 & 0.003 \\
  $\beta_{6}$ & 0 & 0 & 0.158 & 0 & 0.111 & 0 & 0.006 & 0 & 0.011 \\
  $\beta_{7}$ & 0 & 0 & $-$0.144 & 0 & $-$0.071 & 0 & $-$0.005 & 0 & $-$0.008 \\
  $\beta_{8}$ & 0 & 0 & 0.11 & 0 & 0.067 & 0 & 0.004 & 0 & 0.007 \\
  $\beta_{9}$ & 0 & 0 & $-$0.183 & 0 & $-$0.071 & 0 & $-$0.007 & 0 & $-$0.014 \\
  $\beta_{10}$ & 0 & 0 & 0.165 & 0 & 0.105 & 0 & 0.006 & 0 & 0.014 \\
  $\beta_{11}$ & 0.8 & 1.534 & 1.522 & 1.555 & 1.555 & 1.237 & 1.232 & 1.23 & 1.231 \\
  $\beta_{12}$ & 0.8 & 0.271 & 0.279 & 0.053 & 0.187 & 0.696 & 0.693 & 0.689 & 0.685 \\
  $\beta_{13}$ & 0.8 & 0.877 & 0.876 & 0.728 & 0.709 & 0.948 & 0.952 & 0.906 & 0.905 \\
  $\beta_{14}$ & 0.8 & 0.73 & 0.737 & 0.66 & 0.666 & 0.956 & 0.942 & 1.055 & 1.053 \\
  $\beta_{15}$ & 0.8 & 0.744 & 0.741 & 0.527 & 0.532 & 0.928 & 0.926 & 0.919 & 0.917 \\
  $\beta_{16}$ & 0 & 0 & $-$0.128 & 0 & $-$0.059 & 0 & $-$0.002 & 0 & $-$0.003 \\
  $\beta_{17}$ & 0 & 0 & 0.111 & 0 & 0.078 & 0 & 0.002 & 0 & 0.006 \\
  $\beta_{18}$ & 0 & 0 & 0.177 & 0 & 0.131 & 0 & 0.003 & 0 & 0.006 \\
  $\beta_{19}$ & 0 & 0 & $-$0.023 & 0 & $-$0.003 & 0 & $-$0.001 & 0 & $-$0.003 \\
  $\beta_{20}$ & 0 & 0 & $-$0.056 & 0 & $-$0.015 & 0 & $-$0.002 & 0 & $-$0.003 \\
\hline
\end{tabular*}
\caption{Posterior mean and posterior median estimators under spike and slab models using Example~1 with two different error variances.}
\label{table:coef}
\end{table}

\section{Discussion\label{sec:discussion}}

The primary goal of the group lasso is to both select groups of variables
and estimate corresponding coefficients. Previous Bayesian approaches
via multivariate scale mixture of normals do have shrinkage effects
at the group level but do not yield sparse estimators.

Spike and slab type priors facilitate variable selection by putting
a point mass at 0, or in the case of group variable selection, a multivariate
point mass at $\bm{0}_{m\times1}$ for an $m$-dimensional coefficients group. Since the posterior mean estimator
still does not produce sparse estimators, two variable selection criterion
were proposed. Highest posterior probability model \citep{geweke_variable_1994,kuo_variable_1998,george_approaches_1997}
is a very popular one since via Gibbs sampling simulations we could
easily obtain the model and an estimate of its corresponding posterior
probability. Alternatively, one can use FDR based variable selection
which selects variables with marginal inclusion probability larger
than certain threshold and we could choose the threshold to control
the overall average Bayesian FDR rate \citep{bonato_bayesian_2011,zhang_bayesian_2014}.
Median probability model is advocated by \citet{barbieri_optimal_2004}
due to its optimal prediction performance. We note that this is the
special case of FDR based methods with thresholds set to $\frac{1}{2}$.
Our median thresholding model is more parsimonious than the median
probability model because the median of a variable with a spike and
slab distribution is 0 if and only if the probability for it to be
either larger or smaller than 0 are both less than $\frac{1}{2}$.

Posterior median estimator is distinctive in the Bayesian methods
since it can both select and estimate automatically like the lasso
estimator. We demonstrate in this paper that it can achieve superior
variable selection accuracy and good prediction performance at the
same time. It tends to select fewer variables than group lasso methods
but achieves similar or sometimes better prediction error. Compared
to the highest probability model, the median thresholding model is
at least as good as and sometimes better than it in terms of true
and false positive rate.

\appendix

\section{Propriety of \eqref{eq:sgl_prior_on_variances}}

Prior \eqref{eq:sgl_prior_on_variances} is proper since
\begin{align*}
 & \prod_{j=1}^{m_{g}}\left[\left(\tau_{gj}^{2}\right)^{-\frac{1}{2}}\left(\frac{1}{\tau_{gj}^{2}}+\frac{1}{\gamma_{g}^{2}}\right)^{-\frac{1}{2}}\right]\left(\gamma_{g}^{2}\right)^{-\frac{1}{2}}\exp\left\{ -\frac{\lambda_{1}^{2}}{2}\sum_{j=1}^{m_{g}}\tau_{gj}^{2}-\frac{\lambda_{2}^{2}}{2}\gamma_{g}^{2}\right\} \\
 & =\prod_{j=1}^{m_{g}}\left[\left(1+\frac{\tau_{gj}^{2}}{\gamma_{g}^{2}}\right)^{-\frac{1}{2}}\right]\left(\gamma_{g}^{2}\right)^{-\frac{1}{2}}\exp\left\{ -\frac{\lambda_{1}^{2}}{2}\sum_{j=1}^{m_{g}}\tau_{gj}^{2}-\frac{\lambda_{2}^{2}}{2}\gamma_{g}^{2}\right\} \\
 & \leq\left(\gamma_{g}^{2}\right)^{-\frac{1}{2}}\exp\left\{ -\frac{\lambda_{2}^{2}}{2}\gamma_{g}^{2}\right\} \exp\left\{ -\frac{\lambda_{1}^{2}}{2}\sum_{j=1}^{m_{g}}\tau_{gj}^{2}\right\} .
\end{align*}

\section{Marginal Prior for The Bayesian Sparse Group Lasso}

With \eqref{eq:sgl_prior_L1}\eqref{eq:sgl_prior_on_variances}, the
marginal prior on $\bm{\beta}_{g}$ is:
\begin{align*}
\pi\left(\bm{\beta}_{g}|\sigma^{2}\right)
 & \propto\int_{0}^{\infty}\cdots\int_{0}^{\infty}\prod_{j=1}^{m_{g}}\left(\frac{1}{\tau_{gj}^{2}}+\frac{1}{\gamma_{g}^{2}}\right)^{\frac{1}{2}}\exp\left\{ -\frac{1}{2\sigma^{2}}\sum_{j=1}^{m_{g}}\left(\frac{1}{\gamma_{g}^{2}}+\frac{1}{\tau_{gj}^{2}}\right)\beta_{gj}^{2}\right\} \\
 & \times\left(\prod_{j=1}^{m_{g}}\left(\frac{1}{\tau_{gj}^{2}}+\frac{1}{\gamma_{g}^{2}}\right)^{-\frac{1}{2}}(\tau_{gj}^{2})^{-\frac{1}{2}}\right)(\gamma_{g}^{2})^{-\frac{1}{2}}\exp\left\{ -\frac{\lambda_{1}^{2}}{2}\sum_{j=1}^{m_{g}}\tau_{gj}^{2}-\frac{\lambda_{2}^{2}}{2}\gamma_{g}^{2}\right\}\\
 & \times\left(\prod_{j=1}^{m_{g}}d\tau_{gj}^{2}\right)d\gamma_{g}^{2}\\
 &\propto\prod_{j=1}^{m_{g}}\int_{0}^{\infty}\left(\tau_{gj}^{2}\right)^{-\frac{1}{2}}\exp\left\{ -\frac{1}{2}\left(\frac{\beta_{gj}^{2}}{\sigma^{2}}\frac{1}{\tau_{gj}^{2}}+\lambda_{1}^{2}\tau_{gj}^{2}\right)\right\} d\tau_{gj}^{2}\\
 & \times\int_{0}^{\infty}\left(\gamma_{g}^{2}\right)^{-\frac{1}{2}}\exp\left\{ -\frac{1}{2}\left(\frac{\|\bm{\beta}_{g}\|_{2}^{2}}{\sigma^{2}}\frac{1}{\gamma_{g}^{2}}+\lambda_{2}^{2}\gamma_{g}^{2}\right)\right\} d\gamma_{g}^{2}\\
 & \propto\exp\left\{ -\frac{\lambda_{1}}{\sigma}\|\bm{\beta}_{g}\|_{1}-\frac{\lambda_{2}}{\sigma}\|\bm{\beta}_{g}\|_{2}\right\} .
\end{align*}

\section{Gibbs Sampler for BSGL}

The joint posterior probability density function of $\bm{\beta},\bm{\tau},\bm{\gamma},\sigma^{2}$
given $\bm{Y},\bm{X}$ is
\begin{align*}
& \pi\left(\bm{\beta},\bm{\tau},\bm{\gamma},\sigma^{2}|\bm{Y},\bm{X}\right)\\
 &\quad{} \propto\left(\sigma^{2}\right)^{-n/2}\exp\left\{ -\frac{1}{2\sigma^{2}}\left(\bm{Y}-\bm{X}\bm{\beta}\right)^{T}\left(\bm{Y}-\bm{X}\bm{\beta}\right)\right\} \\
 &\quad{} \times\prod_{g=1}^{G}\left[\left(\sigma^{2}\right)^{-\frac{m_{g}}{2}}\prod_{j=1}^{m_{g}}\left(\frac{1}{\tau_{gj}^{2}}+\frac{1}{\gamma_{g}^{2}}\right)^{\frac{1}{2}}\exp\left\{ -\frac{1}{2\sigma^{2}}\sum_{j=1}^{m_{g}}\left(\frac{1}{\gamma_{g}^{2}}+\frac{1}{\tau_{gj}^{2}}\right)\beta_{gj}^{2}\right\} \right]\\
 &\quad{} \times\prod_{g=1}^{G}\left[\left(\prod_{j=1}^{m_{g}}\left(\frac{1}{\tau_{gj}^{2}}+\frac{1}{\gamma_{g}^{2}}\right)^{-\frac{1}{2}}(\tau_{gj}^{2})^{-\frac{1}{2}}\right)(\gamma_{g}^{2})^{-\frac{1}{2}}\exp\left\{ -\frac{\lambda_{1}^{2}}{2}\sum_{j=1}^{m_{g}}\tau_{gj}^{2}-\frac{\lambda_{2}^{2}}{2}\gamma_{g}^{2}\right\} \right]\frac{1}{\sigma^{2}}\\
 &\quad{} \propto(\sigma^{2})^{-\frac{n+m}{2}-1}\prod_{g=1}^{G}\left[\prod_{j=1}^{m_{g}}(\tau_{gj})^{-\frac{1}{2}}(\gamma_{g}^{2})^{-\frac{1}{2}}\right]\exp\left\{ -\frac{\lambda_{1}^{2}}{2}\sum_{g=1}^{G}\sum_{j=1}^{m_{g}}\tau_{gj}^{2}-\frac{\lambda_{2}^{2}}{2}\sum_{g=1}^{G}\gamma_{g}^{2}\right\} \\
 &\quad{} \times\exp\left\{ -\frac{1}{2\sigma^{2}}\left(\bm{Y}-\bm{X}\bm{\beta}\right)^{T}\left(\bm{Y}-\bm{X}\bm{\beta}\right)-\frac{1}{2\sigma^{2}}\sum_{g=1}^{G}\sum_{j=1}^{m_{g}}\left(\frac{1}{\gamma_{g}^{2}}+\frac{1}{\tau_{gj}^{2}}\right)\beta_{gj}^{2}\right\}.
\end{align*}
Then we can generate from the posterior distribution using the following
full conditional posteriors,
\begin{align*}
\sigma^{2}|\text{rest} & \sim\text{Inverse Gamma}\left(\frac{m+n}{2},\frac{1}{2}\left(\bm{Y}-\bm{X}\bm{\beta}\right)^{T}\left(\bm{Y}-\bm{X}\bm{\beta}\right)+\frac{1}{2}\bm{\beta}^{T}\bm{V}^{-1}\bm{\beta}\right),\\
\gamma_{g}^{2}|\text{rest} & \stackrel{ind}{\sim}\text{Inverse Gaussian}\left(\frac{\sigma\lambda_{2}}{\|\bm{\beta}_{g}\|_{2}^{2}},\lambda_{2}^{2}\right),\quad g=1,\dots,G,\\
\tau_{gj}^{2}|\text{rest} & \stackrel{ind}{\sim}\text{Inverse Gaussian}\left(\frac{\sigma\lambda_{1}}{|\beta_{gj}|},\lambda_{1}^{2}\right),\quad g=1,\dots,G;\  j=1,\dots,m_{g},\\
\bm{\beta}|\text{rest} & \sim N\left((\bm{X}^{T}\bm{X}+\bm{V}^{-1})^{-1}\bm{X}^{T}\bm{Y},(\bm{X}^{T}\bm{X}+\bm{V}^{-1})^{-1}\right),\\
\lambda_{1}^{2}|\text{rest} & \sim\text{Gamma}\left(p+1,\frac{\|\bm{\tau}\|_{2}^{2}}{2}+d_{1}\right),\\
\lambda_{2}^{2}|\text{rest} & \sim\text{Gamma}\left(\frac{G}{2}+1,\frac{\|\bm{\gamma}\|_{2}^{2}}{2}+d_{2}\right)
\end{align*}
where
\[
\bm{V}=\left(\begin{array}{cccc}
\bm{V}_{1} & 0 & \dots & 0\\
0 & \bm{V}_{2} & \dots & 0\\
\vdots & \vdots & \ddots & \vdots\\
0 & 0 & \dots & \bm{V}_{G}
\end{array}\right).
\]

\bibliographystyle{ba}

\begin{acknowledgement}
Ghosh's research was partially supported by NSF Grants DMS-1007417
and SES-1026165. The authors want to thank the reviewers for their
helpful comments which substantially improved the paper.
\end{acknowledgement}

\end{document}